\newtheorem{theorem}{Theorem}
\newtheorem{proposition}{Proposition}
\newtheorem{lemma}{Lemma}
\newtheorem{corollary}{Corollary}
\def\pd#1{\partial_{#1}}
\def\ann#1{\mathrm{Ann}(#1)}
\def\ini#1{\mathrm{in}_{(0,1)}(#1)}
\def\etr#1{\mathrm{etr}\left(#1\right)}
\title{The Annihilating Ideal of the Fisher Integral}
\author{Tamio Koyama
%Department of Mathematics, Rikkyo University,
%3-34-1 Nishi-Ikeburkuro, Toshima-ku, Tokyo 171-8501, Japan
%{\tt koyamatamio@rikkyo.ac.jp}
}
\date{\today}
\begin{document}
\maketitle

\begin{abstract}
In this paper, we discuss a system of differential equations 
for the Fisher integral on the special orthogonal group.
Especially, we explicitly give a set of linear differential operators
which generates the annihilating ideal of the Fisher integral,
and we prove that the annihilating ideal is a maximal left ideal 
of the ring of differential operators with polynomial coefficients.
Our proof is given by a discussion concerned with an annihilating ideal
of a Schwartz distribution associated with the Haar measure on 
the special orthogonal group.
We also give differential operators annihilating the Fisher integral for 
the diagonal matrix by a new approach.

\noindent{\bf Keywords}\/:
Weyl algebra,
Haar measure,
Annihilating ideal,
Special orthogonal groups.
\end{abstract}

\section{Introduction}
We denote by $SO(n)$ the special orthogonal group with size $n$, i.e., 
$$
SO(n)=\left\{y\in \mathbf R^{n\times n} \mid y^\top y=e, \det y = 1\right\}.
$$
Here, $y^\top$ is the transpose of $y$ and $e$ is the identity matrix.
The Haar measure $\mu$ on $SO(n)$ is a probability measure on $SO(n)$
which satisfies the equation
$$
\int_{SO(n)} f(a^\top y) \mu(dy) = \int_{SO(n)} f(y) \mu(dy) 
$$
for arbitrary continuous function $f$ on $SO(n)$ and any $a\in SO(n)$.
The Fisher integral on the special orthogonal group is 
an integral with $n\times n$ matrix parameter $x$ given by 
$$
\int_{SO_(n)} \exp\left(\mathrm{tr}(xy)\right)\mu(dy)
=\int_{SO_(n)} \exp\left(\sum_{i,j=1}^nx_{ij}y_{ij}\right)\mu(dy).
$$

The Fisher integral is the normalizing constant of the Fisher distribution
which is discussed in \cite{SSTOT2013}.
Numerical calculations of the normalizing constant is very important 
in the point of view of applications in statistics and 
the holonomic gradient method (HGM) has been applied to such problems.
For example, see \cite{NNNOSTT2011}, \cite{HNTT2013}, 
\cite{Sei-Kume2013}, and \cite{KNNT2012-1}.
In order to apply the HGM, we need theoretical consideration of 
differential equations for each problem.
In the case of the Fisher integral, \cite{SSTOT2013} gives 
a system of differential equations for the Fisher integral and 
they conjectured that the system induces a holonomic ideal.
We prove their conjecture positively in this paper.
Furthermore, we also prove that this holonomic ideal is an maximal left ideal
of a Weyl algebra and consequently it is the annihilating ideal of 
the Fisher integral.

In order to show these results, we consider 
a Schwartz distribution which is associated with the Haar measure on 
the special orthogonal group.
We obtain a generating set of the annihilating ideal for the Fisher integral
from the annihilating ideal of the Schwartz distribution.

In the application to statistics, differential equations for 
the Fisher integral in the diagonal case is more important.
Actuary, we need a holonomic system for the function
$$
f(x_1,\dots,x_n)
=\int_{SO_(n)} \exp\left(\sum_{i=1}^nx_iy_{ii}\right)\mu(dy).
$$
In \cite{SSTOT2013}, a system of differential equations for $f(x)$ was 
obtained from a differential equation for the matrix hypergeometric function
$_0F_1$.
It is also proved that differential operators
$$
(x_i^2-x_j^2)\pd{x_i}\pd{x_j}-(x_j\pd{x_i}-x_i\pd{x_j})-(x_i^2-x_j^2)\pd{x_k}
\quad \left((i,j,k)=(1,2,3),(2,3,1),(3,1,2)\right)
$$
annihilates $f(x)$ in the case where $n=3$.
We give these system of differential equations by a new approach
in our paper.

The construction of this paper is as follows.
In Section \ref{24}, we review the basic notions of Weyl algebra
and give some lemmas, which we need in the later section.
In Section \ref{10}, we give a definition of a distribution associated with 
the Haar measure on $SO(n)$.  And we give an generating set of the annihilating 
ideal of the distribution.
In Section \ref{25}, we give an generating set of the annihilating ideal of
the Fisher integral. We also give a system differential equations for the 
Fisher integral in the diagonal case by a new approach.

\section{Weyl Algebra}\label{24}
In this section, we review basic notions in the theory of 
algebraic analysis, and give a lemma concerning 
characteristic varieties and maximal ideals 
which we need for a calculation in the later section.

In the first, we review some notions in algebraic geometry.
Let $n$ be a natural number.
We denote by $x=(x_1,\dots,x_n)$ the standard coordinate of 
the affine space $X:=\mathbf C^n$.
Let $\mathbf C[x]:= \mathbf C[x_1,\dots,x_n]$ be the polynomial ring
with variables $x_1, \dots x_n$.
A subset of the space $X$ is called {\it an algebraic set}
if it can be written as 
$$
\mathbf V(f_1,\dots,f_k)
:= \{a=(a_1,\dots,a_n)\in\mathbf X:f_1(a)=\cdots=f_k(a)=0\}
$$
by finite polynomials $f_1,\dots,f_k\in\mathbf C[x]$.
Any ideal $I$ of the polynomial ring $\mathbf C[x]$ defines
an algebraic set:
\begin{equation}\label{6}
\mathbf V(I):=\{a=(a_1,\dots,a_n)\in\mathbf X: f(a)=0,\,f\in I\}
\end{equation}
On the other hand, for any algebraic set $V\subset\mathbf C^n$,
we can obtain an ideal of $\mathbf C[x]$ by
\begin{equation}\label{7}
\mathbf I(V):=\{f\in\mathbf C[x]: f(x)=0,\,x\in V\}.
\end{equation}
An algebraic set $V$ is said to be irreducible
if it satisfies the following property:
if there exist two algebraic set, $V_1$ and $V_2$,
such that $V=V_1\cup V_2$, then we have $V=V_1$ or $V=V_2$.

For an algebraic set $V$, {\it the Krull dimension} of $V$ is 
the supuremum of the length $k$ of strictly increasing sequence of 
irreducible algebraic sets such that
$$
V_1\subsetneq \cdots \subsetneq V_k \subset V
$$
For any ideal $I\subset\mathbf C[x]$, the Krull dimension of $\mathbf V(I)$
is equals to the degree of the Hilbert polynomial of $I$
(for example, see e.g. \cite{Mumford1999}).
Algorithms computing Krull dimensions for given algebraic set are given
in \cite{CLO1992}.
In the section \ref{10} and \ref{25},
we utilize the methods for computing Krull dimension in this book.

In the next, we review the basic notions of the Weyl algebra.
Let us consider the ring of partial differential operators
with polynomial coefficients
$D_X:=\mathbf C\langle x_i,\pd{i}:i=1,\dots,n\rangle$.
Here, we put $\pd{i}:=\partial/\partial x_i\,(i=1,\dots,n)$.
It is also called the Weyl algebra
The Weyl algebra $D_X$  naturally acts on
the space  of the smooth functions on $X$,
the space of the Schwartz distributions on $X$,
and so on.
When $f$ is a smooth function or a Schwartz distribution on $X$,
we denote by $p\bullet f$ the function which is given by 
applying a differential operator $p\in D_X$ on $f$.
We call a left ideal
\[
\left\{p\in D_X: p\bullet f=0\right\}
\]
in $D_X$ the annihilating ideal of $f$,
and denote it by $\ann{f}$.

Any element $p$ of $D_X$ can be written uniquely as 
the form of a finite sum
$
p=\sum c_{\alpha\beta}x^\alpha\pd{}^\beta\,(c_{\alpha\beta}\in\mathbf C)
$.
Here, $\alpha, \beta\in\mathbf Z_{\geq 0}^n$ are multi indices, 
and $x^\alpha\pd{}^\beta=\prod_{i=1}^nx_i^{\alpha_i}\pd{i}^{\beta_i}$.
For multi index $\beta\in\mathbf Z_{\geq 0}^n$, 
we put $|\beta|=\sum_{i=1}^n\beta_i$.
For a differential operator $p=\sum c_{\alpha\beta}x^\alpha\pd{}^\beta\in D_X$,
we put an element $\mathrm{in}_{(0,1)}(p)$ of 
a polynomial ring $\mathbf C[x,\xi]:= \mathbf C[x_i,\xi_i:i=1,\dots,n]$
as 
$$
\mathrm{in}_{(0,1)}(p)=\sum_{|\beta|=m}c_{\alpha\beta}x^\alpha\xi^\beta
\quad \left(m = \max\{|\beta|:c_{\alpha\beta}\neq 0\}\right).
$$
For a left ideal $I$ of $D_X$, we call an ideal of $\mathbf C[x,\xi]$
defined by 
$$
\left\{\mathrm{in}_{(0,1)}(p) : p \in I\right\}
$$
{\it the characteristic ideal} of $I$, and denote it by $\mathrm{in}_{(0,1)}(I)$.

{\it The characteristic variety } of $I$ is 
the algebraic set $\mathbf V(\mathrm{in}_{(0,1)}(I))\subset X\times\Xi$
defined by the characteristic ideal $\mathrm{in}_{(0,1)}(I)$.
Here, $\Xi:=\mathbf C^n$ and we denote by $\xi=(\xi_1,\dots,\xi_n)$
the standard coordinate system of the space $\Xi$.
When $I\neq D_X$, 
the Krull dimension of the characteristic variety of $I$ 
is not less than $n$
(The Bernstein inequality \cite{Bjork1979},\cite{Coutinho1995}).
When the Krull dimension of the characteristic variety equals to $n$,
the left ideal $I$ is said to be holonomic.

For calculations in the later sections,
we prepare the following lemma
concerning with characteristic varieties and maximal ideals:
\begin{lemma}\label{5}
If left ideals $I$ and $J$ of $D_X$ satisfies $I\subsetneq J$,
then we have $\mathrm{in}_{(0,1)}(I) \subsetneq \mathrm{in}_{(0,1)}(J)$.
\end{lemma}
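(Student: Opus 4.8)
The plan is to pass to the associated graded ring of $D_X$ under the order filtration and to reduce the statement to the standard fact that equality of associated graded ideals forces equality of the filtered ideals. First I would set up the filtration $F_mD_X=\{p\in D_X:\ord{p}\le m\}$ by the order in the $\pd{i}$, and recall that the associated graded ring $\gr{D_X}=\bigoplus_m F_mD_X/F_{m-1}D_X$ is canonically isomorphic to the commutative polynomial ring $\mathbf C[x,\xi]$, because each commutator $[\pd{i},x_i]=1$ lowers the order and therefore vanishes in $\gr{D_X}$. Under this identification $\ini{p}$ is exactly the image of $p$ in the top nonzero graded piece $F_mD_X/F_{m-1}D_X$ with $m=\ord{p}$, and the characteristic ideal $\ini{I}$ is the graded ideal $\gr{I}=\bigoplus_m\big((I\cap F_mD_X)+F_{m-1}D_X\big)/F_{m-1}D_X$, whose homogeneous component of degree $m$ consists of $0$ together with the symbols $\ini{p}$ of the operators $p\in I$ with $\ord{p}=m$. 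Reading $\ini{I}$ degree by degree in this way makes it a genuine graded ideal, even though the raw set of initial forms is not closed under addition across different orders.

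From here the inclusion $\ini{I}\subseteq\ini{J}$ is immediate from $I\subseteq J$, so the entire content of the lemma is the strictness of the inclusion. I would argue by contradiction: assume $I\subsetneq J$ but $\ini{I}=\ini{J}$, and show that this forces $I=J$. To prove $J\subseteq I$, take any $q\in J$ and induct on $m:=\ord{q}$. The case $q=0$ is trivial. Otherwise the symbol $\ini{q}$ lies in the degree-$m$ homogeneous component of $\ini{J}=\ini{I}$, so by the description above there is an operator $p\in I$ with $\ord{p}=m$ and $\ini{p}=\ini{q}$. Then $q-p\in J$ and its top-order parts cancel, so $\ord{q-p}<m$; by the induction hypothesis $q-p\in I$, whence $q=p+(q-p)\in I$. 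Thus $J\subseteq I$, contradicting $I\subsetneq J$, and therefore $\ini{I}\subsetneq\ini{J}$.

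The hard part will be the bookkeeping that makes the subtraction step legitimate, rather than the overall strategy. Specifically, I must match the \emph{entire} top-order symbol of $q$ by an element of $I$ of the same order, so that $q-p$ genuinely drops in order; this is exactly why $\ini{\cdot}$ has to be read as the full degree-$m$ homogeneous part of the associated graded, and why $\ini{I}$ must be handled as a graded ideal one degree at a time. Once that is in place, the induction is guaranteed to terminate because $\ord{q}$ is a nonnegative integer that strictly decreases at each step, and the remaining verifications — that $\gr{D_X}$ is commutative and that the homogeneous components of $\ini{I}$ are $\mathbf C$-subspaces — are routine.
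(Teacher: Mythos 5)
Your proposal is correct, but it takes a genuinely different route from the paper's. You work purely with the order filtration $F_mD_X$ and its associated graded ring $\gr{D_X}\cong\mathbf C[x,\xi]$, identify the characteristic ideal $\ini{I}$ with the graded ideal $\gr{I}$ read degree by degree, and then run a strong induction on $\ord{q}$: matching the top symbol of $q\in J$ by a $p\in I$ of the same order and passing to $q-p$, you show that $\ini{I}=\ini{J}$ together with $I\subseteq J$ forces $I=J$. This is the classical filtered--graded argument (in the style of Bj\"ork or Coutinho), and it is self-contained: it needs no term order, no division algorithm, and no existence theorem for Gr\"obner bases in the Weyl algebra --- only the fact, which you rightly single out as the delicate point, that the nonzero homogeneous degree-$m$ elements of $\gr{I}$ are exactly the symbols $\ini{p}$ of the operators $p\in I$ with $\ord{p}=m$, so that the subtraction step genuinely lowers the order. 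The paper instead refines the weight $(0,1)$ to a term order $<$, takes a Gr\"obner basis of $I$, reduces an element of $J\setminus I$ to a nonzero remainder to obtain $\mathrm{in}_<(I)\subsetneq\mathrm{in}_<(J)$, and then derives a contradiction from $\ini{I}=\ini{J}$ via the compatibility $\mathrm{in}_<(p)=\mathrm{in}_<(\ini{p})$. The paper's argument stays inside the computational Gr\"obner framework it uses elsewhere (for the dimension and primality computations), whereas yours is more elementary and makes the inductive mechanism explicit; both correctly establish the strict inclusion.
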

\begin{proof}
In this proof, we utilize the theory of Gr\"obner basis
(for example, see e.g., \cite{CLO1992}).
Let polynomials $p_1,\dots, p_k$ form a Gr\"obner basis of $I$
with respect to the order $<:=<_{(0,1)}$.
Since the left ideal $J$ is strictly larger than $I$, we can take $p\in J-I$.
Since $p$ is not included in $I$, 
we obtain the remainder $r\neq 0$ after division of $p$ by $p_1,\dots,p_k$.
By replacing $p$ to the remainder $r$,
we can assume $\mathrm{in}_<(p) \notin \mathrm{in}_<(I)$
without loss of generality.
On the other hand, we have $\mathrm{in}_<(p)\in\mathrm{in}_<(J)$ by $p\in J$.
Therefore, $\mathrm{in}_<(J)$ is strictly larger than $\mathrm{in}_<(I)$.
Suppose $\mathrm{in}_{(0,1)}(I) = \mathrm{in}_{(0,1)}(J)$.
For arbitrary $f \in \mathrm{in}_<(J)$, 
there exists $p\in J$ such that $\mathrm{in}_<(p)=f$.
Since we have 
$\mathrm{in}_{(0,1)}(p)\in\mathrm{in}_{(0,1)}(J) = \mathrm{in}_{(0,1)}(I)$,
there exists $q\in I$ such that $\mathrm{in}_{(0,1)}(q)=\mathrm{in}_{(0,1)}(p)$.
Here, we have
\begin{align*}
\mathrm{in}_<(q) 
&= \mathrm{in}_<(\mathrm{in}_{(0,1)}(q))\\
&= \mathrm{in}_<(\mathrm{in}_{(0,1)}(p))\\
&= \mathrm{in}_<(p) =f.
\end{align*}
This contradict that 
$\mathrm{in}_<(J)$ is strictly larger than $\mathrm{in}_<(I)$.
Therefore, we have $\mathrm{in}_{(0,1)}(I) \subsetneq \mathrm{in}_{(0,1)}(J)$.
\end{proof}
\begin{lemma}\label{9}
Suppose a left ideal $I \subset D_X$ is holonomic.
Then $I$ is a maximal left ideal of $D_X$ 
if $\mathrm{in}_{(0,1)}(I)$ is a prime ideal.
\end{lemma}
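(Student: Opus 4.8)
The plan is to prove maximality by contradiction: I would suppose there exists a left ideal $J$ with $I \subsetneq J \subsetneq D_X$ and derive a contradiction, so that the only left ideal strictly containing $I$ is $D_X$ itself. (Since $I$ is holonomic its characteristic variety has dimension $n\geq 0$, so $I\neq D_X$ and the question of maximality is genuine.) The strategy is to leave the Weyl algebra and work in the commutative polynomial ring $\mathbf C[x,\xi]$, where Lemma \ref{5} and the Bernstein inequality give enough control on Krull dimensions.

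First I would apply Lemma \ref{5} to the strict inclusion $I \subsetneq J$ to obtain $\ini{I} \subsetneq \ini{J}$. Since I am assuming $J \neq D_X$, the Bernstein inequality gives $\dim \mathbf V(\ini{J}) \geq n$. On the other hand, the inclusion $\ini{I} \subseteq \ini{J}$ of ideals reverses under taking zero sets, so $\mathbf V(\ini{J}) \subseteq \mathbf V(\ini{I})$. Because $I$ is holonomic, $\dim \mathbf V(\ini{I}) = n$, and because $\ini{I}$ is prime, $\mathbf V(\ini{I})$ is irreducible. Squeezing dimensions, $n \leq \dim \mathbf V(\ini{J}) \leq \dim \mathbf V(\ini{I}) = n$, so $\mathbf V(\ini{J})$ is a closed subvariety of the irreducible variety $\mathbf V(\ini{I})$ of the same dimension $n$; since an irreducible variety admits no proper closed subvariety of full dimension, the two varieties coincide.

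The decisive step is to upgrade this equality of varieties to an equality of ideals, and this is where the primeness hypothesis does its real work. Since $\ini{I}$ is prime it is radical, so by the Nullstellensatz $\mathbf I(\mathbf V(\ini{I})) = \ini{I}$. Applying $\mathbf I(\cdot)$ to the equality $\mathbf V(\ini{J}) = \mathbf V(\ini{I})$ gives $\sqrt{\ini{J}} = \mathbf I(\mathbf V(\ini{J})) = \mathbf I(\mathbf V(\ini{I})) = \ini{I}$. Combined with the chain $\ini{I} \subseteq \ini{J} \subseteq \sqrt{\ini{J}}$, this forces $\ini{J} = \ini{I}$, contradicting the strict inclusion $\ini{I} \subsetneq \ini{J}$ established above. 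Hence no such $J$ can exist, and $I$ is a maximal left ideal.

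I expect the main obstacle to be precisely this passage from equal dimensions to equal varieties and then to equal ideals: one must genuinely invoke both irreducibility (to rule out a proper subvariety of full dimension) and radicality (to recover the defining ideal from its variety via the Nullstellensatz), and it is exactly the primeness of $\ini{I}$ that supplies both properties at once. Everything else is a bookkeeping combination of Lemma \ref{5} and the Bernstein inequality, so the proof rests on the interplay of these two commutative-algebra facts.
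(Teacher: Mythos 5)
Your proof is correct and takes essentially the same approach as the paper: both argue by contradiction using Lemma \ref{5}, the Bernstein inequality, and the two faces of primeness --- radicality (for the Nullstellensatz step) and irreducibility (for the dimension step). The only difference is in the arrangement: the paper turns the strict inclusion $\ini{I}\subsetneq\ini{J}$ into a strict reverse inclusion of characteristic varieties and then exhibits a chain of irreducible sets of length $n+1$ inside the $n$-dimensional variety $\mathbf V(\ini{I})$, whereas you squeeze dimensions to force $\mathbf V(\ini{J})=\mathbf V(\ini{I})$ (the no-proper-full-dimensional-subvariety fact you cite is exactly what the paper's chain argument proves inline) and then recover $\ini{J}=\ini{I}$ via the Nullstellensatz.
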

\begin{proof}
This proof will be by contradiction.
Suppose the left ideal $I \subset D_X$ is not maximal, 
then we have  $I=D_X$ or 
there exists a left ideal $J$ such that $I \subsetneq J \subsetneq D_X$.
In the case of $I=D_X$, the characteristic variety is the empty set.
This contradict that $I$ is a holonomic ideal.
Hence, the left ideal $J$ exists.
By lemma \ref{5}, 
$\mathrm{in}_{(0,1)}(J)$ is strictly larger than $\mathrm{in}_{(0,1)}(I)$.
Since $\mathrm{in}_{(0,1)}(I)$ is a prime ideal, 
$\sqrt{\mathrm{in}_{(0,1)}(I)}$ equals to $\mathrm{in}_{(0,1)}(I)$.
And this implies 
$$
\sqrt{\mathrm{in}_{(0,1)}(I)}
=\mathrm{in}_{(0,1)}(I)
\subsetneq \mathrm{in}_{(0,1)}(J)
\subset \sqrt{\mathrm{in}_{(0,1)}(J)}.
$$
Hence, we have
$\sqrt{\mathrm{in}_{(0,1)}(I)}\subsetneq\sqrt{\mathrm{in}_{(0,1)}(J)}$.
By the Hilbert's Strong Nullstellensatz, 
we have 
$
\mathbf I(\mathbf V(\mathrm{in}_{(0,1)}(I)))
  \subsetneq
\mathbf I(\mathbf V(\mathrm{in}_{(0,1)}(J)))
$.
Here, we use the notations of \eqref{6} and \eqref{7}.
By \cite[Chapter\,1, Section\,4, Proposition\,8]{CLO1992},
$
\mathbf V(\mathrm{in}_{(0,1)}(I))
  \supsetneq
\mathbf V(\mathrm{in}_{(0,1)}(J))
$
holds.
Let $V$ and $W$ be the characteristic varieties $D_X/I$ and $D_X/J$ 
respectively.
By the above arguments, we have $V\supsetneq W$. 
Since $\ini I$ is a prime ideal, $V$ is an irreducible algebraic set.
By the Bernstein inequality, the Krull dimension of $W$ equals to $n$.
Hence, there exist irreducible algebraic sets $W_i\,(i=1,\dots,n)$
such that $W_1\subsetneq \dots, \subsetneq W_n \subset W$.
Adding $V$ to the sequence, 
we obtain a strictly increasing sequence 
$W_1\subsetneq \dots, \subsetneq W_n \subsetneq V$
of irreducible algebraic sets with length $n+1$.
However, this contradict that the dimension of $V$ equals to $n$.
\end{proof}

The Fourier transformation $\mathcal F$
(resp. the inverse Fourier transformation formation $\mathcal F^{-1}$)
 for differential operators is 
a morphism of $\mathbf C$-algebra form $D_X$ to $D_X$ defined by
\begin{align*}
\mathcal F(x_i)&=-\pd{i},&\mathcal F(\pd{i})&=x_i,\\
\mathcal F^{-1}(x_i)&=\pd{i},&\mathcal F^{-1}(\pd{i})&=-x_i.
\end{align*}
Since the Fourier transformation is an isomorphism of $\mathbf C$-algebra,
we have the following lemma:
\begin{lemma}\label{38}
If a left ideal $I\subset D_X$ is maximal,
then 
$$
\mathcal F(I)=\{\mathcal F(p) \mid p\in I\}
$$
and 
$$
\mathcal F^{-1}(I)=\{\mathcal F^{-1}(p) \mid p\in I\}
$$
are also maximal left ideals of $D_X$.
\end{lemma}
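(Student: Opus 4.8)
The plan is to exploit the fact, noted just above the statement, that $\mathcal F$ is an isomorphism of $\mathbf C$-algebras with inverse $\mathcal F^{-1}$, together with the general principle that any ring automorphism carries the lattice of left ideals bijectively onto itself in an inclusion-preserving way, so that maximality is automatically preserved. I would prove the assertion for $\mathcal F$, the argument for $\mathcal F^{-1}$ being identical with the roles of the two maps exchanged.

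First I would check that $\mathcal F(I)$ is actually a left ideal of $D_X$. It is clearly a $\mathbf C$-subspace, being the image of a subspace under a $\mathbf C$-linear map. To see that it is closed under left multiplication, take any $a\in D_X$ and any $\mathcal F(p)$ with $p\in I$. Since $\mathcal F$ is surjective, we may write $a=\mathcal F(b)$ for some $b\in D_X$, and then
$$
a\,\mathcal F(p)=\mathcal F(b)\,\mathcal F(p)=\mathcal F(bp)\in\mathcal F(I),
$$
because $bp\in I$ by the left-ideal property of $I$. Hence $\mathcal F(I)$ is a left ideal. Moreover $\mathcal F(I)\neq D_X$, for otherwise applying $\mathcal F^{-1}$ would give $I=D_X$, contradicting that the maximal ideal $I$ is proper.

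Next I would verify maximality by contradiction. Suppose there were a left ideal $J$ with $\mathcal F(I)\subsetneq J\subsetneq D_X$. Applying the inverse automorphism $\mathcal F^{-1}$, which by the same reasoning sends left ideals to left ideals, and using $\mathcal F^{-1}\circ\mathcal F=\mathrm{id}$, I obtain
$$
I=\mathcal F^{-1}(\mathcal F(I))\subsetneq \mathcal F^{-1}(J)\subsetneq \mathcal F^{-1}(D_X)=D_X.
$$
The strictness of the inclusions is preserved because $\mathcal F^{-1}$ is a bijection. Thus $\mathcal F^{-1}(J)$ is a left ideal strictly between $I$ and $D_X$, contradicting the maximality of $I$. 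Hence no such $J$ exists and $\mathcal F(I)$ is a maximal left ideal of $D_X$.

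The argument presents no genuine obstacle: everything reduces to the formal fact that a ring automorphism transports left ideals to left ideals and preserves strict inclusions. The only points requiring a little care are the surjectivity step used to verify the left-ideal property of $\mathcal F(I)$, and the symmetric use of $\mathcal F^{-1}$ to pull back the hypothetical intermediate ideal $J$; both are immediate consequences of $\mathcal F$ being an isomorphism with inverse $\mathcal F^{-1}$.
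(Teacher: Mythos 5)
Your proof is correct and takes exactly the route the paper intends: the paper offers no written proof at all, stating the lemma as an immediate consequence of the remark that $\mathcal F$ is an isomorphism of $\mathbf C$-algebras, which is precisely the fact your argument makes explicit. Your write-up merely supplies the routine details the paper leaves to the reader --- that the image of a left ideal under an automorphism is a left ideal, that properness and strict inclusions are preserved, and that an intermediate ideal above $\mathcal F(I)$ would pull back to one above $I$.
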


\section{Haar Measure}\label{10}
%\section{Distribution associated with Haar Measure}\label{10}
In this section, we review the Haar measure on the special orthogonal groups,
and define a Schwartz distribution associated with this Haar measure.
Let $n$ be a natural number, $Y$ be a set consisting of
$n\times n$ matrices whose components are real numbers.  
For $1\leq i, j\leq n$, 
let $y_{ij}$ be a function from $Y$ to $\mathbf R$.
For each point $y$ in $Y$, $y_{ij}$ corresponds to $(i,j)$-component of $y$.
The functions $y_{ij}$ give a local coordinate system of $Y$.

The following relations define a sub-manifold of $Y$: 
\begin{align*}
%\delta_{ij}-\sum_{k=1}^n y_{ik}y_{jk} \quad (1\leq i,j\leq n)
y^\top y &= e\\
\det y &= 1,
\end{align*}
Here, $y^\top$ denotes the transpose of $y$ and $e$ denotes the identity matrix.
By the product of matrices, this sub-manifold defines a Lie group.
This Lie group called the special orthogonal group, and denoted by $SO_n$.

On the special orthogonal groups, there uniquely exists 
the measure $\mu$ which satisfies the following properties:
\begin{align*}
\int_{SO_n} f(y)\mu(dy) 
&= \int_{SO_n} f(z^\top y)\mu(dy) \quad f\in C^\infty(SO_n),\,z\in SO_n\\
\int_{SO_n} \mu(dy) 
&= 1
\end{align*}
Here, we denote by $C^\infty(SO_n)$ the set of continuous functions on $SO_n$.
We call the measure $\mu$ the Haar measure on the special orthogonal group.

Let us define a Schwartz distribution on the space $Y$
associated with the Haar measure $\mu$ on $SO_n$.
We denote by $C^\infty_0(Y)$ the set of continuous functions on $Y$
with compact supports.
For a function $f$ on $Y$, $f\restriction_{SO_n}$ denotes the restriction of 
$f$ to $SO_n$.
The map from the functional space $C^\infty_0(Y)$ to $\mathbf R$ 
defined by
\[
\varphi \mapsto \int f\restriction_{SO_n}(y) \mu(dy)
\quad \left(\varphi\in C^\infty_0(Y) \right)
\]
gives a Schwartz distribution on $Y$.
We denote this distribution by the same notation $\mu$.

We denote by
$D_Y:=\mathbf C\langle y_{ij},\pd{ij}: 1\leq i,j\leq n\rangle$
the ring of differential operators with polynomial coefficient
with variable $y_{ij}\,(1\leq i,j\leq n)$.
Here, we put $\pd{ij}:=\partial/\partial y_{ij}$.
The annihilating ideal $\ann{\mu}$ of the distribution $\mu$ on $Y$ 
is a left ideal of $D_Y$.
In this section, we explicitly give a set of differential operators which 
generates the annihilating ideal $\ann{\mu}$.
The first step for this purpose is giving 
some differential operators which annihilate the distribution $\mu$.
The second step is studying the properties 
the ideal $I$ generated by these differential operators.
By these properties, we prove $I=\ann{\mu}$.

\begin{lemma}\label{1}
The following differential operators annihilate $\mu$.
\begin{align}
\label{2}
& \sum_{k=1}^n (y_{ki}\pd{kj}-y_{kj}\pd{ki}) \quad (1\leq i < j \leq n)\\
\label{3}
& \delta_{ij}-\sum_{k=1}^n y_{ki}y_{kj},\,
  \delta_{ij}-\sum_{k=1}^n y_{ik}y_{jk}
  \quad (1\leq i\leq j \leq n)\\
\label{4}
& 1-\det y
\end{align}
Here, $\delta_{ij}$ is the Kronecker's symbol.
\end{lemma}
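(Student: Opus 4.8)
The plan is to treat the three families separately, handling the order-zero multiplication operators \eqref{3} and \eqref{4} first and then the first-order operators \eqref{2}. For \eqref{3} and \eqref{4}, observe that each listed operator is multiplication by a polynomial that vanishes identically on $SO_n$: the entries $\delta_{ij}-\sum_k y_{ki}y_{kj}$ and $\delta_{ij}-\sum_k y_{ik}y_{jk}$ are the $(i,j)$-entries of $e-y^\top y$ and $e-yy^\top$, and $1-\det y$ vanishes because $\det y=1$ on $SO_n$. Since $\mu$ is supported on $SO_n$, multiplication by such a polynomial $g$ annihilates it: for $\varphi\in C^\infty_0(Y)$ we have $\langle g\bullet\mu,\varphi\rangle=\langle\mu,g\varphi\rangle=\int_{SO_n}(g\varphi)|_{SO_n}(y)\,\mu(dy)=\int_{SO_n}g|_{SO_n}(y)\,\varphi|_{SO_n}(y)\,\mu(dy)=0$, since $g|_{SO_n}=0$. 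This disposes of \eqref{3} and \eqref{4} immediately.

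For \eqref{2}, I would first recognize each operator $V_{ij}:=\sum_k(y_{ki}\pd{kj}-y_{kj}\pd{ki})$ as the polynomial vector field on $Y$ generating the right-translation flow $y\mapsto y\exp(t(E_{ij}-E_{ji}))$, where $E_{pq}$ denotes the matrix unit and $E_{ij}-E_{ji}\in\mathfrak{so}(n)$; this is a direct chain-rule computation of $\frac{d}{dt}\big|_{t=0}\varphi(y\exp(t(E_{ij}-E_{ji})))$. The crucial algebraic observation is that $V_{ij}$ is divergence-free: since $i\neq j$, the coefficient $y_{ki}$ does not depend on $y_{kj}$ and the coefficient $-y_{kj}$ does not depend on $y_{ki}$, so $\mathrm{div}\,V_{ij}=\sum_k(\pd{kj}y_{ki}-\pd{ki}y_{kj})=0$. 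Consequently the distributional action carries no zeroth-order correction term: passing through the formal transpose gives $\langle V_{ij}\bullet\mu,\varphi\rangle=-\langle\mu,V_{ij}\varphi+(\mathrm{div}\,V_{ij})\varphi\rangle=-\langle\mu,V_{ij}\varphi\rangle$.

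It then remains to show $\langle\mu,V_{ij}\varphi\rangle=0$, for which I would invoke the invariance of the Haar measure. Because $SO_n$ is compact, $\mu$ is bi-invariant; in particular $\int_{SO_n}\varphi(y\exp(tA))\,\mu(dy)=\int_{SO_n}\varphi(y)\,\mu(dy)$ for $A=E_{ij}-E_{ji}$ and all $t$. Differentiating at $t=0$ and using that the curve $t\mapsto y\exp(tA)$ stays in $SO_n$, so that its velocity equals $(V_{ij}\varphi)|_{SO_n}(y)$, yields $\int_{SO_n}(V_{ij}\varphi)|_{SO_n}(y)\,\mu(dy)=0$, i.e. $\langle\mu,V_{ij}\varphi\rangle=0$. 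Combined with the previous paragraph this gives $V_{ij}\bullet\mu=0$.

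The main obstacle I anticipate is the bookkeeping around the distributional action: one must verify that the formal transpose of $V_{ij}$ leaves no surviving lower-order term, which is precisely what the divergence-free computation guarantees, and that the ambient polynomial vector field $V_{ij}$ restricts on $SO_n$ to the genuine infinitesimal generator of right translation, so that the invariance identity may be differentiated termwise. Once these two compatibility points are settled, the argument reduces to the standard fact that the integral of a tangential Lie derivative against an invariant measure vanishes. The only input beyond the stated left-invariance is the bi-invariance of $\mu$, which holds since $SO_n$ is a compact, hence unimodular, group.
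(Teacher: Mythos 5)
Your proposal is correct and follows essentially the same route as the paper: the operators \eqref{3} and \eqref{4} are handled by the support argument (polynomials vanishing on $SO_n$), and \eqref{2} is identified as the infinitesimal generator of right translation $y\mapsto y\exp(tE_{ij})$, annihilation then following from the right invariance of $\mu$. The only differences are that you make explicit two points the paper leaves implicit --- the divergence-free computation justifying the clean sign flip in the distributional transpose, and the unimodularity of the compact group $SO_n$ justifying right invariance of the (left-invariant) Haar measure --- both of which are welcome clarifications rather than departures.
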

\begin{proof}
Let $\varphi$ be a smooth function on $Y$ with compact support.
Since the functions \eqref{3} vanish on $SO_n$, we have 
\begin{align*}
\left\langle 
  \left(\delta_{ij}-\sum_{k=1}^n y_{ki}y_{kj}\right)\mu, 
  \varphi 
\right\rangle
&= 
\left\langle 
  \mu,  
  \left(\delta_{ij}-\sum_{k=1}^n y_{ki}y_{kj}\right)\varphi
\right\rangle \\
&=
\int_{SO_n} \left(\delta_{ij}-\sum_{k=1}^n y_{ki}y_{kj}\right)\varphi(y) \mu(dy)
=0.
\end{align*}
Hence, the differential operator \eqref{3} annihilates $\mu$.
Analogously, we can prove \eqref{4} annihilates $\mu$.

Let $E_{ij}\ (1\leq i<j\leq n)$ be a $n\times n$ matrix 
whose $(k,\ell)$ element is $\delta_{ik}\delta_{j\ell}-\delta_{jk}\delta_{i\ell}$, 
and $c(t)=\exp(tE_{ij})$ for $t\in \mathbf R$.
For a smooth function $f(y)$ on $Y$, we denote $R_{c(t)}f(y)=f(y\cdot c(t))$.
Let $v_{ij}$ be a vector field on $Y$ defined as 
\[
(v_{ij})_yf = \frac{\partial R_{c(t)}f}{\partial t} \restriction_{t=0}(y)
\quad (y \in Y,\, f \in C^\infty(Y)).
\]
It is easy to show that 
\[
v_{ij} = \sum_{k=1}^n (y_{ki}\pd{kj}-y_{kj}\pd{ki}).
\]
Note that the differential operator $\pd{ij}=\partial/\partial y_{ij}$
can be regarded as a vector field on $Y$.
Since the measure $\mu$ is right invariant under $SO_n$, we have
\begin{align*}
\left\langle \sum_{k=1}^n (y_{ik}\pd{jk}-y_{jk}\pd{ik})\mu, \varphi \right\rangle
&= 
-\left\langle \mu, \sum_{k=1}^n (y_{ik}\pd{jk}-y_{jk}\pd{ik})\varphi \right\rangle\\
&= 
-\int_{SO_n}  (v_{ij}\varphi)(y)\mu(dy)\\
&=
-\int_{SO_n}
\frac{\partial R_{c(t)}\varphi}{\partial t} \restriction_{t=0}(y)\mu(dy)\\
%&=
%-\int_{SO_n}
% \lim_{t\rightarrow 0}
% \frac{\varphi(c(-t)\cdot Y)-\varphi(Y)}{t} \mu(dY)\\
&=
-\lim_{t\rightarrow 0}\int_{SO_n}
\frac{\varphi(y\cdot c(t))-\varphi(y)}{t} \mu(dy)\\
&=
0.
\end{align*}
Hence, the differential operator \eqref{2} annihilates $\mu$.
\end{proof}
Let $I$ be an ideal generated by the differential operators 
\eqref{2},\eqref{3}, and \eqref{4}.
By lemma \ref{1}, we have $I\subset\ann{\mu}$.
For the opposite inclusion,
it is enough to prove the following proposition:
\begin{proposition}\label{8}
The left ideal $I$ is a holonomic ideal, 
and the characteristic ideal of $I$ is a prime ideal.
\end{proposition}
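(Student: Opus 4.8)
The plan is to identify the characteristic variety of $I$ with the conormal variety of $SO_n$ inside $Y$, and then to show that the ideal generated by the symbols of the generators \eqref{2}, \eqref{3}, \eqref{4} is already prime. Throughout, the ambient dimension is $N=n^2$ (the number of variables $y_{ij}$), and I write $\xi_{ij}$ for the symbol variable dual to $\pd{ij}$, so that the characteristic ideal $\ini I$ lives in $\mathbf C[y_{ij},\xi_{ij}]$.

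First I would compute the symbols of the generators. The operators \eqref{3} and \eqref{4} have order $0$, so their symbols are the polynomials $\delta_{ij}-(y^\top y)_{ij}$, $\delta_{ij}-(yy^\top)_{ij}$ and $1-\det y$ themselves, while the operators \eqref{2} are homogeneous of order $1$ with symbol $\sum_k(y_{ki}\xi_{kj}-y_{kj}\xi_{ki})=(y^\top\xi-\xi^\top y)_{ij}$. Let $J_0\subseteq\mathbf C[y_{ij},\xi_{ij}]$ be the ideal they generate; by definition $J_0\subseteq\ini I$. Reading the equations directly, $\mathbf V(J_0)$ is exactly the set of pairs $(y,\xi)$ with $y\in SO_n$ (over $\mathbf C$) and $y^\top\xi$ symmetric, i.e. the conormal variety $T^*_{SO_n}Y$. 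Via $(y,S)\mapsto(y,yS)$ this variety is isomorphic to $SO_n\times\mathrm{Sym}(n)$, hence it is smooth and irreducible of dimension $\binom n2+\binom{n+1}2=n^2$.

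Holonomicity then follows quickly. Since $(e,0)\in\mathbf V(J_0)$ we have $J_0\neq(1)$, so $I\neq D_Y$ and the Bernstein inequality gives $\dim\mathbf V(\ini I)\geq n^2$. On the other hand $J_0\subseteq\ini I$ forces $\mathbf V(\ini I)\subseteq\mathbf V(J_0)$, which has dimension $n^2$. Hence $\dim\mathbf V(\ini I)=n^2$ and $I$ is holonomic; moreover $\mathbf V(\ini I)$ is a closed subvariety of the same dimension inside the irreducible variety $\mathbf V(J_0)$, so the two coincide and $\sqrt{\ini I}=\sqrt{J_0}=\mathbf I(T^*_{SO_n}Y)$.

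For primeness it suffices to prove that $J_0$ is radical: then $J_0=\sqrt{J_0}=\sqrt{\ini I}\supseteq\ini I\supseteq J_0$, so $\ini I=J_0$, and since $\mathbf V(J_0)$ is irreducible this ideal is prime. To see $J_0$ is radical I would apply the Jacobian criterion on the smooth variety $T^*_{SO_n}Y$: at every point the Jacobian of the generators of $J_0$ has rank at most the codimension $n^2$, and I must check it is exactly $n^2$, which forces the local rings to be regular, hence reduced. The left-multiplication action $(y,\xi)\mapsto(ay,a\xi)$ by $a\in SO_n$ is linear in the coordinates and carries the generators of $J_0$ into $\mathbf C$-combinations of themselves, so the Jacobian rank is constant along orbits; since every point lies in the orbit of some $(e,S)$ with $S$ symmetric, it is enough to verify the rank there. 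Concretely this splits into two independent checks: fibrewise over $SO_n$ the $\binom n2$ linear forms $(y^\top\xi-\xi^\top y)_{ij}$ in $\xi$ are independent (immediate, since $\xi\mapsto y^\top\xi$ is invertible), while the $y$-equations cut out $SO_n$ reducedly because $y^\top y=e$ is a complete intersection defining the smooth $O(n)$ and $\det y-1$ separates its two components. I expect this Jacobian/transversality computation to be the main obstacle, the rest being formal once $\mathbf V(J_0)$ has been correctly matched with the conormal variety.
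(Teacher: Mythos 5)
Your skeleton coincides with the paper's own proof: the paper also introduces the ideal generated by the symbols of \eqref{2}, \eqref{3}, \eqref{4} (your $J_0$ is its $J$), uses the Bernstein inequality for the lower bound $\dim\mathbf V(\ini{I})\geq n^2$, the inclusion $J_0\subseteq\ini{I}$ for the upper bound, and irreducibility of $\mathbf V(J_0)$ plus the Strong Nullstellensatz to force $\ini{I}=J_0$. Where you genuinely diverge is the proof that $J_0$ is prime. The paper argues algebraically: it transports $J_0$ by the substitution $\xi\mapsto y\xi$ (inverse $\xi\mapsto y^\top\xi$ modulo the relations) to the decoupled ideal $J'$ generated by \eqref{3}, \eqref{4} and $\xi_{ij}-\xi_{ji}$, verifying the identities \eqref{12}--\eqref{15}, then quotes Weyl's theorem \cite{Weyl1946} that the defining ideal of $SO_n$ is prime and concludes by a tensor-product-of-domains argument (Lemma \ref{21}). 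You perform the same coordinate change at the level of varieties---$(y,S)\mapsto(y,yS)$ identifies $\mathbf V(J_0)$ with $SO_n\times\mathrm{Sym}(n)$, i.e.\ the conormal variety---and then prove that $J_0$ is \emph{radical} by the Jacobian criterion, deducing primeness from ``radical plus irreducible.'' Your route trades the citation of Weyl's theorem and the identity bookkeeping for a smoothness computation, and that computation does go through: at a point $(e,S)$ the Jacobian of the generators is block triangular, the $y$-block has rank $n(n+1)/2$ (smoothness of the complete intersection $y^\top y=e$), the $\xi$-block has rank $n(n-1)/2$ (the independent forms $d\xi_{ij}-d\xi_{ji}$), so the total rank equals the codimension $n^2$, and your $SO_n$-action argument correctly propagates this to every point. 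Note that you still need connectedness (irreducibility) of $SO_n(\mathbf C)$ as an external input, which is exactly the role Weyl's theorem plays in the paper; your approach buys independence from that reference at the cost of invoking the Jacobian criterion for regularity and the fact that reducedness can be checked at maximal ideals.

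One step is wrong as stated: ``since $(e,0)\in\mathbf V(J_0)$ we have $J_0\neq(1)$, so $I\neq D_Y$.'' The implication $J_0\neq(1)\Rightarrow I\neq D_Y$ is false in general, because $J_0$ is generated only by the symbols of the chosen generators, not by all of $\ini{I}$: in one variable, $\pd{}$ and $\pd{}-1$ generate the whole Weyl algebra, yet their symbols generate the proper ideal $(\xi)$. Since the Bernstein inequality presupposes $D_Y/I\neq 0$, you must justify $I\neq D_Y$ by other means; the immediate fix is Lemma \ref{1}: the ideal $I$ annihilates the distribution $\mu$, and $\mu\neq 0$, hence $I\subseteq\ann{\mu}\subsetneq D_Y$. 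With that repair, and with the Jacobian rank computation written out as above, your argument is complete and correct.
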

In fact, by this proposition and lemma \ref{9}, 
the left ideal $I$ is a maximal ideal of $D_Y$.
Since $\ann{\mu}\neq D_Y$, we have $I=\ann{\mu}$.

Let $J$ be an ideal of the polynomial ring 
$\mathbf C[y,\xi]:=\mathbf C[y_{ij},\xi_{ij}:1\leq i,j\leq n]$
generated by \eqref{3},\eqref{4} and 
$$
\sum_{k=1}^n (y_{ki}\xi_{kj}-y_{kj}\xi_{ki}) \quad (1\leq i < j \leq n).
$$
Obviously, $J\subset\ini{I}$ holds
and we have $\mathbf V(J)\supset\mathbf V(\ini{I})$.
Now, let us suppose $J$ is a prime ideal 
and the Krull dimension of $\mathbf V(J)$ equals to $n\times n$.
By the Bernstein's inequality, the Krull dimension of $\mathbf V(\ini{I})$
is not less than $n\times n$.
Then, we have $\mathbf V(J) = \mathbf V(\ini{I})$.
By the Strong Nullstellensatz, we have $\sqrt{J} = \sqrt{\ini{I}}$.
Here, utilizing the assumption that $J$ is prime, we have $J = \sqrt{J}$.
Consequently, we have $J = \sqrt{\ini{I}} \supset \ini{I}$.
Hence, $J=\ini{I}$ holds.
This shows that the Krull dimension of $\mathbf V(\ini{I})$ equals to $n\times n$,
i.e., the left ideal $I$ is holonomic and the ideal $\ini{I}$ is prime.

In order to prove proposition \ref{8},
it is enough to show the following two statement:
$J$ is a prime ideal 
and the Krull dimension of $\mathbf V(J)$ equals to $n\times n$.
For this purpose, we define an ideal $J'$ such that
$\mathbf C[y,\xi]/J\cong \mathbf C[y,\xi]/J'$,
and show that $J'$ is prime and the Krull dimension of $\mathbf V(J')$ 
equals to $n\times n$.

Let $J'$ be an ideal of $\mathbf C[y,\xi]$ generated by 
\eqref{3},\eqref{4}, and 
\begin{equation}\label{11}
 \xi_{ij} - \xi_{ji} \quad  (1\leq i< j \leq n).
\end{equation}

\begin{lemma}
The quotient ring $\mathbf C[y,\xi] / J$ is isomorphic to $\mathbf C[y,\xi] / J'$ 
as $\mathbf C$-algebra. 
\end{lemma}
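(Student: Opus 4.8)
The plan is to exhibit an explicit $\mathbf C$-algebra isomorphism between the two quotients by realizing the difference between the generating sets of $J$ and $J'$ as an invertible linear change of the $\xi$-variables over the coordinate ring of the orthogonal matrices. First I would rewrite the generators in matrix form. Writing $Y=(y_{ij})$ and $\Xi=(\xi_{ij})$, the extra generators of $J$ are exactly the entries $(Y^\top\Xi-\Xi^\top Y)_{ij}=\sum_k(y_{ki}\xi_{kj}-y_{kj}\xi_{ki})$ with $i<j$, that is, the condition that $Y^\top\Xi$ be symmetric; whereas the extra generators of $J'$, the polynomials $\xi_{ij}-\xi_{ji}$, are the entries of $\Xi-\Xi^\top$, that is, the condition that $\Xi$ be symmetric. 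Both $J$ and $J'$ contain the ideal $K$ generated by \eqref{3} and \eqref{4}, which involves only the $y$-variables; hence $\mathbf C[y,\xi]/K=S[\xi]$ with $S=\mathbf C[y]/(K\cap\mathbf C[y])$, and in $S$ the relations \eqref{3} say precisely that $Y$ is orthogonal, so $Y$ is invertible with $Y^{-1}=Y^\top$.

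Next I would define the change of variables. Over $S$ the matrix $Y^\top$ is invertible, so the $S$-algebra endomorphism $\Psi$ of $S[\xi]$ determined by $\Psi(\xi_{ij})=(Y^\top\Xi)_{ij}=\sum_k y_{ki}\xi_{kj}$ is an automorphism, with inverse $\xi_{ij}\mapsto(Y\Xi)_{ij}$, as one checks by composing and using $Y^\top Y=YY^\top=e$. This is the step where passing to the quotient is essential: over the full ring $\mathbf C[y,\xi]$ the same substitution is not invertible, since $\det Y^\top$ is not a unit, so the isomorphism genuinely lives over $S$ rather than over $\mathbf C[y]$.

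I would then track the generators under $\Psi$. Since $\Psi$ fixes $S$ and is applied entrywise, $\Psi(\Xi)=Y^\top\Xi$, while $\Psi((\Xi^\top)_{ij})=\Psi(\xi_{ji})=(Y^\top\Xi)_{ji}=(\Xi^\top Y)_{ij}$, so $\Psi(\Xi-\Xi^\top)=Y^\top\Xi-\Xi^\top Y$. Thus $\Psi$ carries the generators $\xi_{ij}-\xi_{ji}$ of the image of $J'$ in $S[\xi]$ exactly onto the generators $\sum_k(y_{ki}\xi_{kj}-y_{kj}\xi_{ki})$ of the image of $J$. Being an automorphism, $\Psi$ therefore maps the ideal $J'/K$ isomorphically onto $J/K$ and induces an isomorphism $S[\xi]/(J'/K)\cong S[\xi]/(J/K)$, that is, $\mathbf C[y,\xi]/J'\cong\mathbf C[y,\xi]/J$.

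The computations here are all routine; the only real content is the observation that, modulo the orthogonality relations, the antisymmetry of $Y^\top\Xi$ and the antisymmetry of $\Xi$ differ by the invertible linear substitution $\Xi\mapsto Y^\top\Xi$. The main point to be careful about is not to attempt this substitution at the level of $\mathbf C[y,\xi]$, where it fails to be an automorphism, but to perform it over $S$, where $Y$ is genuinely invertible; once that is arranged, the matching of generators is immediate.
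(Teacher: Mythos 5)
Your proof is correct, and it is built on the same substitution that drives the paper's proof, but packaged in a genuinely different and cleaner way. The paper's map $\psi(\xi_{ij})=\sum_k y_{ki}\xi_{kj}$ is exactly your $\Psi$, and its $\phi(\xi_{ij})=\sum_k y_{ik}\xi_{kj}$ is your inverse; however, the paper keeps both as endomorphisms of the full ring $\mathbf C[y,\xi]$, where neither is invertible, and pays for this with the explicit correction identities \eqref{12}--\eqref{15} (whose error terms lie in the ideal generated by \eqref{3}), followed by the kernel computations $\ker(p_2\phi)=J$ and $\ker(p_1\psi)=J'$, the isomorphism theorem, and a final check that the two induced maps are mutually inverse. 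You instead pass first to the quotient by the common ideal $K$ generated by \eqref{3} and \eqref{4}; over $S=\mathbf C[y]/(K\cap\mathbf C[y])$ the matrix $Y$ is orthogonal, hence invertible with $Y^{-1}=Y^\top$, so $\Xi\mapsto Y^\top\Xi$ becomes an honest $S$-algebra automorphism of $S[\xi]$ that carries the generators $\xi_{ij}-\xi_{ji}$ of $J'/K$ exactly onto the generators $\sum_k(y_{ki}\xi_{kj}-y_{kj}\xi_{ki})$ of $J/K$ (this exact matching is the paper's identity \eqref{13}), and the isomorphism of quotients follows at once. What your route buys is the elimination of the correction-term formulas \eqref{12}, \eqref{14}, \eqref{15} and a transparent reason why the two maps are mutually inverse, namely $YY^\top=Y^\top Y=e$ in $S$; what the paper's route buys is that it never leaves $\mathbf C[y,\xi]$, so every step is a concrete polynomial identity. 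The one point you use implicitly --- that $K$ is extended from an ideal of $\mathbf C[y]$, so $\mathbf C[y,\xi]/K\cong S[\xi]$ and $K\cap\mathbf C[y]$ is generated by \eqref{3} and \eqref{4} in $\mathbf C[y]$ --- is standard and invoked correctly.
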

\begin{proof}
Define $\mathbf C$-algebra homomorphisms 
$\phi :\mathbf C[y,\xi] \rightarrow \mathbf C[y,\xi]$ and 
$\psi :\mathbf C[y,\xi] \rightarrow \mathbf C[y,\xi]$ 
as 
\begin{align*}
\phi(y_{ij}) &= y_{ij}, \quad 
\phi(\xi_{ij}) = \sum_{k=1}^n y_{ik}\xi_{kj}  \quad (1\leq i, j \leq n),\\
\psi(y_{ij}) &= y_{ij}, \quad
\psi(\xi_{ij}) = \sum_{k=1}^n y_{ki}\xi_{kj} \quad (1\leq i, j \leq n).
\end{align*} 
By some calculations, we can prove the following formula:
\begin{align}
\phi\left(\sum_{k=1}^n(y_{ki}\xi_{kj}-y_{kj}\xi_{ki})\right)
&=\xi_{ij}-\xi_{ji}
-\sum_{\ell=1}^n\left(\delta_{i\ell}-\sum_{k=1}^n y_{ki}y_{k\ell}\right)\xi_{\ell j} 
\nonumber\\
&\quad +\sum_{\ell=1}^n\left(\delta_{j\ell}-\sum_{k=1}^n y_{kj}y_{k\ell}\right)\xi_{\ell i}
\label{12}\\
\psi\left(\xi_{ij}-\xi_{ji}\right)
&=\sum_{k=1}^n (y_{ki}\xi_{kj}-y_{kj}\xi_{ki})
\label{13}\\
\phi\psi(\xi_{ij})
&=\xi_{ij}
  -\sum_{\ell=1}^n\xi_{\ell j}\left(\delta_{i\ell}-\sum_{k=1}^ny_{ki}y_{k\ell}\right)
\label{14}\\
\psi\phi(\xi_{ij})
&=\xi_{ij}
  -\sum_{\ell=1}^n\xi_{j\ell}\left(\delta_{i\ell}-\sum_{k=1}^ny_{ik}y_{\ell k}\right)
\label{15}
\end{align}
Let
$p_1:\mathbf C[y,\xi]\rightarrow \mathbf C[y,\xi]/J \, (p_1(f) = \overline f)$
and 
$p_2:\mathbf C[y,\xi]\rightarrow\mathbf C[y,\xi]/J' \, (p_2(f) = \overline f)$
be the projections, then we have
$\ker(p_2\phi)=J$ and $\ker(p_1\psi)=J'$.
In fact, 
$\ker(p_2\phi)\supset J$ follows by \eqref{12}, 
and $\ker(p_1\psi)\supset J'$ follows by \eqref{13}.
Let $f\in\ker(p_2\phi)$, then we have $\phi(f)\in J'$.
By \eqref{13}, we have $\psi\phi(f)\in J$.
Since we also have $f-\psi\phi(f)\in J$ by \eqref{15},
$f$ is an element of $J$.
Hence, $\ker(p_2\phi)=J$ holds.
Analogously, if we take $f\in\ker(p_1\psi)$, 
then $\psi(f)$ is an element of $J$.
The equation \eqref{12} implies $\psi\phi(f)\in J'$.
The equation \eqref{14} also implies $f-\phi\psi(f)\in J'$,
and we have  $f\in J'$.
Hence, $\ker(p_1\psi)=J'$ holds also.

By the isomorphism theorem,
we have two morphisms, 
$\mathbf C[y,\xi]/J\rightarrow \mathbf C[y,\xi]/J'$ and
$\mathbf C[y,\xi]/J'\rightarrow \mathbf C[y,\xi]/J$.
We can show by some calculations 
that their compositions equal to the identity morphisms.
\end{proof}
In order to prove that the ideal $J'$ is prime, 
we utilize a tensor product of $\mathbf C$-algebras.
The following lemma is well known:
\begin{lemma}\label{21}
Let $\mathbf C[x]:=\mathbf C[x_1,\dots,x_n]$, 
$\mathbf C[y]:=\mathbf C[y_1,\dots,y_m]$, 
and $\mathbf C[x,y]:=\mathbf C[x_1,\dots,x_n,y_1,\dots,y_m]$
be polynomial rings.
And we denote by
$\iota_1:\mathbf C[x]\rightarrow\mathbf C[x,y]$ 
and
$\iota_2:\mathbf C[y]\rightarrow\mathbf C[x,y]$
the immersion maps.
Let $I_1$ and $I_2$ be ideals of $\mathbf C[x]$ and $\mathbf C[y]$
respectively.
Then, there exists the following isomorphism:
$$
\mathbf C[x]/I_1 \otimes_{\mathbf C} \mathbf C[y]/I_2
\cong \mathbf C[x,y]/I
\quad\left(
      \overline{f}\otimes\overline{g}\mapsto\overline{\iota_1(f)\iota_2(g)}
    \right)
$$
where $I=\mathbf C[x,y]\iota_1(I_1)+\mathbf C[x,y]\iota_2(I_2)$.
\end{lemma}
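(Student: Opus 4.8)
The plan is to exhibit a pair of mutually inverse $\mathbf C$-algebra homomorphisms between the two quotient rings, taking the prescribed map $\overline f\otimes\overline g\mapsto\overline{\iota_1(f)\iota_2(g)}$ as one of them. The conceptual backbone is the standard identification $\mathbf C[x,y]\cong\mathbf C[x]\otimes_{\mathbf C}\mathbf C[y]$: the monomials $\{x^\alpha\}$ and $\{y^\beta\}$ are $\mathbf C$-bases of $\mathbf C[x]$ and $\mathbf C[y]$, so the elementary tensors $\{x^\alpha\otimes y^\beta\}$ form a $\mathbf C$-basis of the tensor product, and they are carried bijectively onto the monomial basis $\{x^\alpha y^\beta\}$ of $\mathbf C[x,y]$. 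Passing to quotients is then an instance of the general fact that tensoring the presentations $\mathbf C[x]/I_1$ and $\mathbf C[y]/I_2$ over the field $\mathbf C$ is exact, because both quotients are $\mathbf C$-vector spaces and hence flat; this is what guarantees that the kernel computation below comes out exactly as $I$.

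First I would build the map from $\mathbf C[x,y]$. Define the $\mathbf C$-algebra homomorphism $\Psi\colon\mathbf C[x,y]\to(\mathbf C[x]/I_1)\otimes_{\mathbf C}(\mathbf C[y]/I_2)$ on monomials by $x^\alpha y^\beta\mapsto\overline{x^\alpha}\otimes\overline{y^\beta}$ and extend $\mathbf C$-linearly; this is multiplicative because $(\overline{x^\alpha}\otimes\overline{y^\beta})(\overline{x^{\alpha'}}\otimes\overline{y^{\beta'}})=\overline{x^{\alpha+\alpha'}}\otimes\overline{y^{\beta+\beta'}}$ in the tensor product of commutative algebras. Each generator $\iota_1(f)$ of $I$ with $f\in I_1$ satisfies $\Psi(\iota_1(f))=\overline f\otimes 1=0$, and each $\iota_2(g)$ with $g\in I_2$ satisfies $\Psi(\iota_2(g))=1\otimes\overline g=0$; hence $I\subseteq\ker\Psi$ and $\Psi$ descends to $\overline\Psi\colon\mathbf C[x,y]/I\to(\mathbf C[x]/I_1)\otimes(\mathbf C[y]/I_2)$.

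Conversely I would produce the inverse directly from the universal property of the tensor product. The assignment $(\overline f,\overline g)\mapsto\overline{\iota_1(f)\iota_2(g)}$ is well defined, since replacing $f$ by $f+f_0$ with $f_0\in I_1$ alters $\iota_1(f)\iota_2(g)$ by $\iota_1(f_0)\iota_2(g)\in\mathbf C[x,y]\iota_1(I_1)\subseteq I$, and symmetrically in $g$; it is also $\mathbf C$-bilinear, so it induces $\Phi\colon(\mathbf C[x]/I_1)\otimes(\mathbf C[y]/I_2)\to\mathbf C[x,y]/I$ with $\Phi(\overline f\otimes\overline g)=\overline{\iota_1(f)\iota_2(g)}$, and $\Phi$ is multiplicative because $\iota_1(f)\iota_2(g)\,\iota_1(f')\iota_2(g')=\iota_1(ff')\iota_2(gg')$ in the commutative ring $\mathbf C[x,y]$. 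It then remains to check that $\overline\Psi$ and $\Phi$ are mutually inverse, which I would verify on generators: $\Phi(\overline\Psi(\overline{x^\alpha y^\beta}))=\Phi(\overline{x^\alpha}\otimes\overline{y^\beta})=\overline{x^\alpha y^\beta}$, and using multiplicativity of $\overline\Psi$ together with $\overline\Psi(\overline{\iota_1(f)})=\overline f\otimes 1$ and $\overline\Psi(\overline{\iota_2(g)})=1\otimes\overline g$ we get $\overline\Psi(\Phi(\overline f\otimes\overline g))=\overline f\otimes\overline g$.

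The only point that is not purely formal is the assertion $\ker\Psi=I$, equivalently the injectivity of $\overline\Psi$; every other step is a routine well-definedness or multiplicativity check. I would establish it not by computing $\ker\Psi$ head-on but through the mutually-inverse verification of the previous paragraph, which forces $\overline\Psi$ to be a bijection; alternatively one can invoke right-exactness of the functors $-\otimes_{\mathbf C}(\mathbf C[y]/I_2)$ and $(\mathbf C[x]/I_1)\otimes_{\mathbf C}-$ to identify the kernel as $\mathbf C[x,y]\iota_1(I_1)+\mathbf C[x,y]\iota_2(I_2)=I$. I expect the bookkeeping of representatives in the well-definedness of $\Phi$ to be the most error-prone part, but it is entirely mechanical.
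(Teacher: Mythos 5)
Your proposal is correct, but it does something the paper does not even attempt: the paper's entire ``proof'' of this lemma is a citation to Mumford's Red Book (I, \S 6, Proposition 1), treating the statement as a standard fact about tensor products of affine coordinate rings. You instead give a self-contained elementary argument: the homomorphism $\Psi$ from $\mathbf C[x,y]$ killing $I$, the map $\Phi$ obtained from the universal property of the tensor product, and the verification that the induced maps are mutually inverse on spanning sets (classes of monomials on one side, elementary tensors on the other). That verification is sound and complete, and it correctly isolates the one non-formal point --- that $\ker\Psi$ is exactly $I$ rather than something larger --- resolving it by the bijectivity forced by the two-sided inverse rather than by a direct kernel computation. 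One remark: your opening appeal to flatness/exactness of $-\otimes_{\mathbf C}(\mathbf C[y]/I_2)$ is dispensable scaffolding; the mutually-inverse-maps argument you actually carry out never uses it, so you could delete it without loss (or, if you prefer the homological route, it alone would suffice: tensor $0\to I_1\to\mathbf C[x]\to\mathbf C[x]/I_1\to 0$ with $\mathbf C[y]/I_2$ and use right-exactness twice). What your approach buys is that the lemma becomes independent of the external reference; what the paper's approach buys is brevity, at the cost of asking the reader to unwind a statement about products of affine varieties into the purely algebraic form used here.
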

\begin{proof}
see, e.g., \cite[I,\S 6,Proposition\ 1.]{Mumford1999}.
\end{proof}
Now, let us compute the ideal $J'$.
\begin{lemma}
The ideal $J'$ is a prime ideal 
and the Krull dimension of $\mathbf V(J')$ equals to $n\times n$. 
\end{lemma}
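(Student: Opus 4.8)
The plan is to exploit the fact that the generators of $J'$ split into two families using disjoint sets of variables: the relations \eqref{3} and \eqref{4} involve only the $y_{ij}$, whereas \eqref{11} involves only the $\xi_{ij}$. Accordingly I would set $J'_y\subset\mathbf C[y]$ to be the ideal generated by \eqref{3} and \eqref{4}, and $J'_\xi\subset\mathbf C[\xi]$ the ideal generated by \eqref{11}, so that $J'=\mathbf C[y,\xi]J'_y+\mathbf C[y,\xi]J'_\xi$. Lemma \ref{21} then supplies a $\mathbf C$-algebra isomorphism
$$
\mathbf C[y,\xi]/J'\;\cong\;\bigl(\mathbf C[y]/J'_y\bigr)\otimes_{\mathbf C}\bigl(\mathbf C[\xi]/J'_\xi\bigr),
$$
and it suffices to understand each tensor factor separately.

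The $\xi$-factor is immediate. The generators $\xi_{ij}-\xi_{ji}\ (i<j)$ of $J'_\xi$ are linearly independent linear forms, so $\mathbf C[\xi]/J'_\xi$ is a polynomial ring in the $n(n+1)/2$ remaining symmetric coordinates. It is therefore an integral domain of Krull dimension $n(n+1)/2$.

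The substantive work lies in the $y$-factor. Here $\mathbf V(J'_y)$ is cut out by $y^\top y=e$, $yy^\top=e$, and $\det y=1$, that is, it is exactly the special orthogonal group $SO_n(\mathbf C)$. I would argue that $SO_n(\mathbf C)$ is a smooth, connected — hence irreducible — affine algebraic group of dimension $n(n-1)/2$, the dimension of its Lie algebra of skew-symmetric matrices, so that $\mathbf C[y]/J'_y$ is an integral domain of Krull dimension $n(n-1)/2$. The delicate point is not the irreducibility of the underlying variety but the claim that $J'_y$ is genuinely \emph{prime}, i.e. that the listed relations already generate the full radical ideal of $SO_n(\mathbf C)$ rather than some nonreduced thickening of it. I expect to settle this by a Jacobian/smoothness computation showing that the orthogonality scheme is reduced of the expected codimension, together with the observation that adjoining $\det y=1$ selects precisely the identity component of the two-component group $O_n(\mathbf C)$. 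This is the main obstacle; everything else is formal.

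With both factors exhibited as finitely generated integral domains over $\mathbf C$, the conclusion follows from two standard facts. First, since $\mathbf C$ is algebraically closed, the tensor product over $\mathbf C$ of two finitely generated integral domains is again an integral domain; hence $\mathbf C[y,\xi]/J'$ is a domain and $J'$ is prime. Second, the Krull dimension of such a tensor product is the sum of the dimensions of the factors, giving
$$
\frac{n(n-1)}{2}+\frac{n(n+1)}{2}=n^2=n\times n,
$$
which is the asserted dimension of $\mathbf V(J')$.
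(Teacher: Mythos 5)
Your proposal is correct and, in its skeleton, coincides with the paper's proof: the paper also splits $J'$ into a $y$-part $J'_1$ (generated by \eqref{3} and \eqref{4}) and a $\xi$-part $J'_2$ (generated by \eqref{11}), identifies $\mathbf C[\xi]/J'_2$ with a polynomial ring in the $n(n+1)/2$ symmetric coordinates, combines the factors through Lemma \ref{21}, and adds the dimensions $n(n-1)/2+n(n+1)/2=n^2$. The genuine divergence is in the two places where primeness is actually established. For the $y$-factor the paper proves nothing: it quotes Weyl's classical theorem (p.~147, Theorem (5.4c) of the cited book) asserting that the ideal of relations of $SO_n$ is prime, whereas you propose a self-contained scheme-theoretic argument: the differential of $y\mapsto y^\top y$ is surjective onto symmetric matrices at every orthogonal point (for symmetric $s$, the matrix $h=y_0s/2$ maps to $s$), so the orthogonality ideal is radical of the expected codimension, and $1-\det y$ then cuts out the identity component, whose coordinate ring is a domain by smoothness plus connectedness. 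This plan is valid, and it has the merit of isolating reducedness as the real issue --- something the paper never makes explicit --- but note that as written it is still a sketch: to be a complete proof you must run the Jacobian computation, check that killing $1-\det y$ in the split coordinate ring of the two-component group $O_n(\mathbf C)$ yields exactly the coordinate ring of $SO_n(\mathbf C)$, and justify connectedness of $SO_n(\mathbf C)$; until then your argument outsources the crux to ``standard facts'' in the same way the paper outsources it to Weyl. For the passage from primeness of the two factors to primeness of $J'$, your route is cleaner: you invoke directly the fact that a tensor product of finitely generated domains over an algebraically closed field is a domain, while the paper argues more circuitously via irreducibility of the product variety $\mathbf V(J'_1)\times\mathbf V(J'_2)$ and an isomorphism $\mathbf C[y,\xi]/\sqrt{J'}\cong\mathbf C[y,\xi]/J'$ --- an argument that implicitly rests on the very same tensor-product fact (the identification of the coordinate ring of a product with the tensor product of the coordinate rings), so your version loses nothing and gains directness.
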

\begin{proof}
In the first, we calculate the dimension of $\mathbf V(J')$.
Let $J'_1$ be an ideal of $\mathbf C[y_{ij}:1\leq i,j\leq n]$
generated by the polynomials \eqref{3} and \eqref{4}.
The algebraic set $\mathbf V(J'_1)$ equals to the special orthogonal group,
and it's Krull dimension is $n(n-1)/2$.
Moreover, 
the ideal $J'_1$ is prime by \cite[p147.Theorem(5.4c)]{Weyl1946}.
Especially, the algebraic set $\mathbf V(J'_1)$ is irreducible.

Let $J'_2$ be an ideal of $\mathbf C[\xi_{ij}:1\leq i,j\leq n]$
generated by the polynomials \eqref{11}.
Let $<$ be a graded lexicographic order 
which satisfies $\xi_{ij}>\xi_{ji}\,(1\leq i<j\leq n)$.
The polynomials \eqref{11} form a Gr\"obner basis of $J'_2$ 
with respect to the order $<$.
Hence, the Krull dimension of $\mathbf V(J'_1)$ equals to $n(n+1)/2$.
Besides,
the quotient ring $\mathbf C[\xi_{ij}:1\leq i,j\leq n]/J'_2$ is 
isomorphic to a polynomial ring $\mathbf C[\xi_{ij}: 1\leq i \leq j\leq n]$.
In fact, let
$
\varphi: 
\mathbf C[\xi_{ij}:1\leq i,j\leq n]/J'_2 
\rightarrow 
\mathbf C[\xi_{ij}: 1\leq i \leq j\leq n]
$
and 
$
\psi: 
\mathbf C[\xi_{ij}: 1\leq i \leq j\leq n]
\rightarrow 
\mathbf C[\xi_{ij}:1\leq i,j\leq n]/J'_2 
$
be morphisms defined by 
\begin{align*}
\varphi(\overline \xi_{ij})
&=
\begin{cases}
\xi_{ij} &(i\leq j)\\
\xi_{ji} &(i > j)
\end{cases}, &
\psi(\xi_{ij}) &= \overline \xi_{ij},
\end{align*}
then $\varphi\psi$ and $\psi\varphi$ are isomorphisms.
Since $\mathbf C[\xi_{ij}: 1\leq i \leq j\leq n]$ is an integral domain, 
$\mathbf C[\xi_{ij}:1\leq i,j\leq n]/J'_2$ is also an integral domain.
Hence, $J'_2$ is a prime ideal.

By $\mathbf V(J')=\mathbf V(J'_1)\times\mathbf V(J'_2)$,
the Krull dimension of $\mathbf V(J')$ equals $n(n-1)/2+n(n+1)/2=n^2$.

In the second, we show that the ideal $J'$ is prime.
Since $\mathbf V(J'_1)$ and $\mathbf V(J'_2)$ are irreducible algebraic sets,
their product $\mathbf V(J')=\mathbf V(J'_1)\times\mathbf V(J'_2)$ is 
irreducible also.
Hence, the coordinate ring $\mathbf C[y,\xi]/\sqrt{J'}$ of $\mathbf V(J')$
is an integral domain.
Also, we have an isomorphism between the coordinate rings:
$$
\mathbf C[y,\xi]/\sqrt{J'}
\rightarrow 
\mathbf C[y]/J'_1 \otimes_{\mathbf C}\mathbf C[\xi]/J'_2
\quad
\left(\overline{f(y)g(\xi)} \mapsto \overline{f(y)}\otimes\overline{g(\xi)}\right).
$$
By lemma \ref{21}, we have an isomorphism
$$
\mathbf C[y]/J'_1 \otimes_{\mathbf C}\mathbf C[\xi]/J'_2
\rightarrow \mathbf C[x,\xi]/J'
\quad \left(
         \overline{f(y)}\otimes\overline{g(\xi)}\rightarrow\overline{f(y)g(\xi)}
     \right).
$$
These isomorphisms give an isomorphism
$$
\mathbf C[y,\xi]/\sqrt{J'}\rightarrow\mathbf C[y,\xi]/J'
\quad \left(\overline{f(y)g(\xi)}\rightarrow\overline{f(y)g(\xi)}\right).
$$
Since this isomorphism implies 
that $\mathbf C[y,\xi]/J'$ is an integral domain,
$J'$ is a prime ideal.
\end{proof}

Therefore, we have the following theorem.
\begin{theorem}\label{34}
The differential operators \eqref{2},\eqref{3}, and \eqref{4} generate
the annihilating ideal of the Schwartz distribution $\mu$ 
associated with the Haar measure on the special orthogonal group.
\end{theorem}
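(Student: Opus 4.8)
The plan is to promote the inclusion furnished by Lemma \ref{1} to an equality by a maximality argument, so that essentially no new computation is needed beyond what Proposition \ref{8} already supplies. First I would note that Lemma \ref{1} gives $I \subset \ann{\mu}$ at once, since each generator \eqref{2}, \eqref{3}, \eqref{4} annihilates $\mu$. The entire content of the theorem is then the reverse inclusion $\ann{\mu} \subset I$.

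To obtain it, I would invoke Proposition \ref{8}, which asserts that $I$ is holonomic and that its characteristic ideal $\ini{I}$ is prime. These are precisely the two hypotheses of Lemma \ref{9}, so I may conclude that $I$ is a maximal left ideal of $D_Y$. This maximality step is the conceptual hinge of the argument: it transfers the geometric information about the characteristic variety (irreducibility and Krull dimension $n^2$) into the statement that no proper left ideal sits strictly between $I$ and $D_Y$.

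Finally I would eliminate the degenerate alternative. Since $I$ is maximal and $I \subset \ann{\mu}$, either $\ann{\mu} = I$ or $\ann{\mu} = D_Y$. The latter would force $1 \bullet \mu = \mu = 0$, contradicting that $\mu$ is a nonzero distribution; indeed its total mass $\int_{SO_n}\mu(dy) = 1$ is nonzero, so $1 \notin \ann{\mu}$ and hence $\ann{\mu} \neq D_Y$. Maximality then leaves only $\ann{\mu} = I$, which is the assertion.

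The hard part is not in this final assembly but has already been discharged inside Proposition \ref{8}: identifying the characteristic ideal with $J$ and verifying that $J$ (through its isomorphic model $J'$) is prime of Krull dimension $n^2$, using the decomposition $\mathbf{V}(J') = \mathbf{V}(J'_1) \times \mathbf{V}(J'_2)$ together with Lemma \ref{21}. Granting those facts, the theorem reduces to the formal chain Lemma \ref{1} $\Rightarrow$ Proposition \ref{8} $\Rightarrow$ Lemma \ref{9} $\Rightarrow$ maximality of $I$, supplemented only by the observation that $\mu \neq 0$.
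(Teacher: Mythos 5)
Your proposal is correct and follows exactly the paper's own route: the paper also deduces Theorem \ref{34} by combining Lemma \ref{1} (giving $I \subset \mathrm{Ann}(\mu)$), Proposition \ref{8}, and Lemma \ref{9} to get maximality of $I$, and then concludes $I = \mathrm{Ann}(\mu)$ from $\mathrm{Ann}(\mu) \neq D_Y$. Your explicit justification that $\mathrm{Ann}(\mu) \neq D_Y$ via the nonvanishing total mass $\int_{SO_n}\mu(dy)=1$ is a small but welcome addition that the paper leaves implicit.
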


\section{The Fisher Integral}\label{25}
In this section, we use the notations in Section \ref{10}.
Especially, $\mu$ denotes the Haar measure on the special orthogonal group.
For a square matrix $a$, we put 
$\etr{a}:=\exp\left(\mathrm{tr}\left(a\right)\right)$.

The following lemma is useful \cite{OST2003}:
\begin{lemma}\label{17}
Let 
$ D_n:= \mathbf C\langle x_1, \dots, x_n, \partial _1, \dots, \partial _n \rangle$
be the ring of differential operators with polynomial coefficients.
Let $u$ be a Schwartz distribution and $f$ be a polynomial in  
$\mathbf C[x_1,\dots,x_n]$. We put $f_i :=\partial  f/\partial  x_i$.
Suppose a left ideal $I$ of $D_n$ is holonomic and annihilates $u$.
Then, the left ideal $J$ generated by 
$$
\left\{
P(x_1, \dots, x_n;\partial _{x_1}-f_1, \dots, \partial _{x_n}-f_n)
\vert
P(x_1, \dots, x_n;\partial _{x_1}, \dots, \partial _{x_n})\in I
\right\}
$$
is holonomic and annihilates the distribution $e^fu$.
\end{lemma}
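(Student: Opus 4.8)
The plan is to realise the passage from $I$ to $J$ as the image of $I$ under a single $\mathbf C$-algebra automorphism of $D_n$ induced by the gauge factor $e^f$, and then to transport both desired properties across this automorphism. Concretely, I would introduce the endomorphism $\sigma_f : D_n \to D_n$ determined by $\sigma_f(x_i) = x_i$ and $\sigma_f(\pd{x_i}) = \pd{x_i} - f_i$. The only point to check for well-definedness is that the Weyl relations are preserved; the nontrivial relation $[\pd{x_i}-f_i,\pd{x_j}-f_j]=0$ holds precisely because $\pd{x_i}(f_j)=\pd{x_j}(f_i)$, both equal to $\partial^2 f/\partial x_i\partial x_j$. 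Running the same construction with $-f$ yields a two-sided inverse, so $\sigma_f$ is an automorphism; hence $J=\sigma_f(I)$ is again a left ideal, generated by the images of any generators of $I$. The crucial observation is that $\sigma_f$ is conjugation by the gauge factor: as operators on functions and distributions one has $\sigma_f(P)=e^f\,P\,e^{-f}$, which is immediate on the generators $x_i$ and $\pd{x_i}$ and extends multiplicatively.

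For the annihilation statement I would establish the intertwining identity
$$
\sigma_f(P)\bullet(e^f v)=e^f\,(P\bullet v)
$$
for every $P\in D_n$ and every distribution $v$. This is verified on the generators $x_i$ and $\pd{x_i}$ — the latter using $\pd{x_i}(e^f v)=e^f(f_i v+\pd{x_i}v)$ — and then propagated to arbitrary $P$ by induction on word length, since both sides are compatible with composition. Taking $v=u$ and $P\in I\subset\ann{u}$ gives $\sigma_f(P)\bullet(e^f u)=e^f\cdot 0=0$, so every generator of $J$ annihilates $e^f u$, and therefore so does $J$.

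For holonomicity I would show that $\sigma_f$ leaves the characteristic ideal unchanged. Since each $f_i$ has order $0$ in the derivations, the substitution $\pd{x_i}\mapsto\pd{x_i}-f_i$ alters any operator only by terms of strictly lower order in $\partial$; thus $\sigma_f$ preserves the order filtration and induces the identity on the associated graded ring $\gr{D_n}\cong\mathbf C[x,\xi]$. This gives $\ini{\sigma_f(P)}=\ini{P}$ for all $P$, whence $\ini{J}=\ini{I}$. The two ideals then define the same characteristic variety, so $\mathbf V(\ini{J})$ has the same Krull dimension $n$ as $\mathbf V(\ini{I})$; since $I$ is holonomic, so is $J$.

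The step demanding the most care is the verification that $\sigma_f$ is a well-defined automorphism acting as the identity on symbols; both hinge on the elementary but essential facts that the mixed partials of $f$ commute and that each $f_i$ is of order zero in $\partial$. The intertwining identity is the computational heart of the annihilation part, but it becomes routine once the conjugation picture $\sigma_f(\cdot)=e^f(\cdot)e^{-f}$ is established.
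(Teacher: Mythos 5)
Your proposal is correct. Note, however, that the paper does not prove this lemma at all: it is quoted from the reference [OST2003] (Oaku--Shiraki--Takayama), so there is no internal proof to compare against. Your argument --- realizing $P\mapsto P(x;\partial_{x_1}-f_1,\dots,\partial_{x_n}-f_n)$ as the algebra automorphism $\sigma_f$ of $D_n$ (well defined because the mixed partials of $f$ commute), identifying it with conjugation $P\mapsto e^fPe^{-f}$ to get the intertwining identity and hence the annihilation of $e^fu$, and observing that $\sigma_f$ preserves the order filtration and acts as the identity on $\gr{D_n}$, so that $\ini{J}=\ini{I}$ and holonomicity (in the paper's sense, via the $(0,1)$-characteristic variety) is preserved --- is the standard gauge-transformation proof and is essentially the argument of the cited reference. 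All the key verifications (the Weyl relations, the two-sided inverse $\sigma_{-f}$, the Leibniz computation on distributions, and the fact that the set $\{\sigma_f(P):P\in I\}$ is already a left ideal) are present and sound.
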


{\it The Fisher integral} is the following function defined by 
an integration on the special orthogonal group:
$$
f(x) = \int_{SO(n)} \etr{xy}\mu(dy)
\quad \left(x\in\mathbf C^{n\times n} \right).
$$
Here, $x=(x_{ij})$ is an $n\times n$ matrix over the field of real numbers.
In this section, 
we explicitly give the annihilating ideal of the Fisher integral $f(x)$
as an application of Theorem \ref{34}.

In \cite{SSTOT2013}, it is proved that 
the Fisher integral $f(x)$ annihilated by the following differential operators:
\begin{align}
\label{35}
& \sum_{k=1}^n (x_{ki}\pd{x_{kj}}-x_{kj}\pd{x_{ki}}) \quad (1\leq i < j \leq n)\\
\label{36}
& \delta_{ij}-\sum_{k=1}^n \pd{x_{ki}}\pd{x_{kj}},\,
  \delta_{ij}-\sum_{k=1}^n \pd{x_{ik}}\pd{x_{jk}}
  \quad (1\leq i\leq j \leq n)\\
\label{37}
& 1-\det \pd{x},
\end{align}
Here, we denote by $\det \pd{x}$ the determinant 
of the matrix whose $(i.j)$-element is $\pd{x_{ij}}$.

\begin{theorem}
The annihilating ideal of the Fisher integral $f(x)$ is generated 
by the differential operators \eqref{35}, \eqref{36}, and \eqref{37}.
\end{theorem}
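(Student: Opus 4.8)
The plan is to realize the Fisher integral $f(x)$ as the inverse Fourier transform of the distribution $\mu$, and to push the description of $\ann{\mu}$ from Theorem \ref{34} through that transform. Identifying $D_Y$ and $D_X$ as one Weyl algebra via $y_{ij}\leftrightarrow x_{ij}$ and $\pd{ij}\leftrightarrow\pd{x_{ij}}$, the goal is the single identity $\ann{f}=\mathcal F^{-1}(\ann{\mu})$. This would finish the proof at once: by Theorem \ref{34} the ideal $\ann{\mu}$ is generated by \eqref{2}, \eqref{3}, \eqref{4}, and since $\mathcal F^{-1}$ is an automorphism it carries this left ideal onto the one generated by the images of those operators. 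Applying $\mathcal F^{-1}$, i.e. substituting $y_{ij}\mapsto\pd{x_{ij}}$ and $\pd{ij}\mapsto -x_{ij}$, sends \eqref{3} to \eqref{36} and \eqref{4} to \eqref{37} verbatim, while in \eqref{2} the first-order terms created by noncommutativity cancel in pairs (the two $\delta_{ij}$ contributions have opposite signs), leaving exactly \eqref{35}.

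For the inclusion $\mathcal F^{-1}(\ann{\mu})\subseteq\ann{f}$ it would suffice, since $\ann{f}$ is a left ideal, to know that the generators \eqref{35}, \eqref{36}, \eqref{37} annihilate $f$, which is already recorded in \cite{SSTOT2013}. I would nonetheless prefer a self-contained derivation through the transform itself. Writing $K(x,y):=\exp\left(\sum_{i,j=1}^n x_{ij}y_{ij}\right)$, one has $f(x)=\langle\mu,K(x,\cdot)\rangle$, the pairing of the compactly supported distribution $\mu$ (carried by $SO_n$) with the entire function $K(x,\cdot)$. From $\pd{x_{ij}}K=y_{ij}K$ and $\pd{ij}K=x_{ij}K$ I would derive, for any compactly supported distribution $\nu$,
$$
\langle y_{ij}\nu,K\rangle=\pd{x_{ij}}\langle\nu,K\rangle,
\qquad
\langle\pd{ij}\nu,K\rangle=-x_{ij}\langle\nu,K\rangle,
$$
and then, peeling generators off from the left and using that $\mathcal F^{-1}$ is a ring homomorphism, conclude by induction on monomial length (and linearity) that $\langle P\bullet\mu,K(x,\cdot)\rangle=(\mathcal F^{-1}(P)\bullet f)(x)$ for every $P\in D_Y$. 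For $P\in\ann{\mu}$ this gives $\mathcal F^{-1}(P)\bullet f=0$.

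The reverse inclusion I would obtain from maximality rather than by a direct computation. The proof of Theorem \ref{34} shows $\ann{\mu}$ is the maximal left ideal $I$ of $D_Y$, so Lemma \ref{38} makes $\mathcal F^{-1}(\ann{\mu})$ a maximal left ideal of $D_X$. Since $f(0)=\int_{SO_n}\mu(dy)=1$, the Fisher integral is not the zero function, so $\ann{f}\neq D_X$ is proper. Then the chain $\mathcal F^{-1}(\ann{\mu})\subseteq\ann{f}\subsetneq D_X$, together with the maximality of the left-hand ideal, forces $\mathcal F^{-1}(\ann{\mu})=\ann{f}$, which is the claim.

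I expect the main obstacle to be the analytic bookkeeping behind the two pairing identities in the second step: differentiating $\langle\mu,K(x,\cdot)\rangle$ in the parameter $x$ under the pairing, and transferring $\pd{ij}$ from $\mu$ onto $K$. Both are valid precisely because $\mu$ has compact support on the compact group $SO_n$ while $K(x,\cdot)$ is entire, so no boundary terms occur and $f$ is analytic in $x$; making this rigorous is the only delicate part. Everything downstream is formal — recognizing the transform as $\mathcal F^{-1}$ and invoking Lemma \ref{38} — apart from the $\delta_{ij}$ cancellation that identifies the image of \eqref{2} with \eqref{35}.
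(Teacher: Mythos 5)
Your proposal is correct and follows essentially the same route as the paper: identify the ideal generated by \eqref{35}, \eqref{36}, \eqref{37} with $\mathcal F^{-1}(\ann{\mu})$ using Theorem \ref{34}, transfer maximality via Lemma \ref{38}, and conclude from $\ann{f}\neq D_X$ that the maximal ideal $\mathcal F^{-1}(\ann{\mu})\subseteq\ann{f}$ must equal $\ann{f}$. The only difference is cosmetic: where the paper silently relies on the fact (quoted from \cite{SSTOT2013} just before the theorem) that these operators annihilate $f$ to get the inclusion $\mathcal F^{-1}(\ann{\mu})\subseteq\ann{f}$, you rederive that inclusion self-containedly through the kernel pairing $\langle P\bullet\mu,K(x,\cdot)\rangle=(\mathcal F^{-1}(P)\bullet f)(x)$, which is a sound and slightly more complete presentation of the same argument.
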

\begin{proof}
Let $I$ be a left ideal generated by the differential operators
\eqref{35}, \eqref{36}, and \eqref{37}.
Since the ideal $I$ equals to $\mathcal F^{-1}(\ann{\mu})$ 
and $\ann{\mu}$ is maximal, $I$ is also maximal by Lemma \ref{38}.
Since $\ann{f}\neq D_X$ and $I$ is maximum,
we have $I=\ann{f}$.
\end{proof}
\begin{corollary}
The left ideal generated by \eqref{35}, \eqref{36}, and \eqref{37}
is a maximal ideal of $D_X$. 
And this ideal is a holonomic ideal of $D_X$.
\end{corollary}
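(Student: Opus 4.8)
The statement has two parts, and the maximality is already in hand. The preceding theorem identifies the left ideal $I$ generated by \eqref{35}, \eqref{36} and \eqref{37} with $\mathcal F^{-1}(\ann{\mu})$, and $\ann{\mu}$ is a maximal left ideal of $D_X$ by the results of Section \ref{10} (Proposition \ref{8} together with Lemma \ref{9}); Lemma \ref{38} then makes $I=\mathcal F^{-1}(\ann{\mu})$ maximal too. So the only genuinely new content is holonomicity, and I would single that out. I would also note at the start that maximality does not give it for free: over a Weyl algebra in several variables there exist maximal left ideals whose quotient is a simple but non-holonomic module, so a separate argument is unavoidable.

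The plan is to transport holonomicity from $\ann{\mu}$ to $I$ along the Fourier transform. Proposition \ref{8}, restated in Theorem \ref{34}, already gives that $\ann{\mu}$ is holonomic, i.e.\ its characteristic variety has Krull dimension $n^2$, the minimum permitted by the Bernstein inequality. Since $\mathcal F$ is a $\mathbf C$-algebra automorphism of $D_X$ and $I=\mathcal F^{-1}(\ann{\mu})$, the quotient $D_X/I$ is obtained from $D_X/\ann{\mu}$ by the twist induced by $\mathcal F$, and the classical theory (\cite{Bjork1979}, \cite{Coutinho1995}) tells us that the Fourier transform carries holonomic modules to holonomic modules. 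Hence $I$ is holonomic, as claimed.

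The point that I expect to require the most care, and that I would spell out explicitly, is that the paper's notion of holonomicity is phrased through the order filtration $\ini{\cdot}$, whereas $\mathcal F$ does not preserve this filtration: it interchanges $x_{ij}$ with $\pd{x_{ij}}$ and so sends low-order operators to high-order ones. Concretely, the order-zero operators \eqref{3} are carried to the order-two operators \eqref{36}, so $\ini{I}$ is not obtained from $\ini{\ann{\mu}}$ by merely exchanging the $y$- and $\xi$-coordinates, and the characteristic-variety computation of Section \ref{10} cannot be transported verbatim. The remedy is to argue with the Bernstein filtration, which assigns a total degree jointly to the $x_{ij}$ and the $\pd{x_{ij}}$ and is therefore manifestly stable under $\mathcal F$; the Gelfand--Kirillov dimension computed from a good filtration is independent of the chosen good filtration and coincides with the Krull dimension of the characteristic variety, so holonomicity is the same notion for both filtrations. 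Establishing this equivalence is the main obstacle; once it is in place, the evident $\mathcal F$-invariance of the Bernstein dimension transfers holonomicity from $\ann{\mu}$ to $I$ immediately.
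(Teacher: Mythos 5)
Your proof is correct, and it is in substance the argument the paper intends: the corollary is stated without any proof, its maximality half being contained verbatim in the proof of the preceding theorem (the identification of the ideal with $\mathcal F^{-1}(\ann{\mu})$ together with Lemma \ref{38}), and its holonomicity half resting implicitly on the classical fact that the Fourier transform preserves holonomicity --- a fact the paper never states, since Lemma \ref{38} covers only maximality. What you add is exactly the justification the paper omits: first, the remark that maximality alone cannot yield holonomicity (Stafford's simple non-holonomic modules over Weyl algebras show the second claim is not a formal consequence of the first, so the corollary genuinely needs its own argument); second, the observation that $\mathcal F$ does not respect the order filtration underlying the paper's definition of holonomic (it sends the order-zero operators \eqref{3} to the order-two operators \eqref{36}), so one cannot transport the characteristic-variety computation of Section \ref{10} directly, and must instead pass through the Bernstein filtration and the standard equivalence of the two notions of dimension found in \cite{Bjork1979} and \cite{Coutinho1995}. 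Both points are correct and are precisely where a careful reader would demand detail; your proof is a faithful completion of the paper's implicit argument rather than a different route.
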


In application to statistics, the case where the matrix $x$ is diagonal is 
more important. In this case, the Fisher integral is a function 
with respect to $x_1,\dots, x_n$, which are the diagonal elements of $x$, 
defined by
\begin{equation}\label{40}
\tilde f(x_1,\dots,x_n)
=
\int_{SO(n)}\exp\left(\sum_{i=1}^nx_iy_{ii}\right)\mu(dy).
\end{equation}
A system of differential equations for \eqref{40} was given 
in \cite{SSTOT2013}.
\begin{proposition}[\cite{SSTOT2013}]\label{41}%% (24)
The differential operator
\begin{equation}\label{43}
\pd{x_i}^2
-\sum_{k\neq i}\frac{1}{x_i^2-x_k^2}\left(x_i\pd{x_i}-x_k\pd{x_k}\right)
-1
\quad (i=1,\dots,n)
\end{equation}
annihilates \eqref{40}.
\end{proposition}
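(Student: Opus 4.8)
The plan is to read the diagonal system off the operators \eqref{35} and \eqref{36} annihilating the full Fisher integral $f(x)$, rather than going through the ${}_0F_1$ equation. Write $\mathrm{diag}(x_1,\dots,x_n)$ for the diagonal matrix with diagonal entries $x_1,\dots,x_n$, so that $\tilde f(x_1,\dots,x_n)=f(\mathrm{diag}(x_1,\dots,x_n))$; since moving $x_i$ changes only the $(i,i)$-entry of the argument, $\pd{x_i}^2\bullet\tilde f=(\pd{x_{ii}}^2\bullet f)\big|_{\mathrm{diag}}$, where $\big|_{\mathrm{diag}}$ denotes restriction to the diagonal locus. Applying the operator \eqref{36} with $i=j$, that is $1-\sum_k\pd{x_{ki}}^2$, to $f$ and restricting to the diagonal yields
\begin{equation}
\pd{x_i}^2\bullet\tilde f=\tilde f-\sum_{k\neq i}\left(\pd{x_{ki}}^2\bullet f\right)\big|_{\mathrm{diag}}.
\end{equation}
Hence the proposition reduces to expressing each off-diagonal second derivative $(\pd{x_{ki}}^2\bullet f)\big|_{\mathrm{diag}}$ ($k\neq i$) through the first-order diagonal data $\pd{x_i}\bullet\tilde f$ and $\pd{x_k}\bullet\tilde f$.

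To obtain these expressions I would exploit the rotation operators \eqref{35}. For $k\neq i$ set $R_{ik}=\sum_\ell(x_{\ell i}\pd{x_{\ell k}}-x_{\ell k}\pd{x_{\ell i}})$, so that $R_{ik}\bullet f=0$ identically on $\mathbf C^{n\times n}$. Differentiating this identity once more, separately by $\pd{x_{ki}}$ and by $\pd{x_{ik}}$, and then restricting to the diagonal produces two linear relations among the three quantities $(\pd{x_{ki}}^2\bullet f)\big|_{\mathrm{diag}}$, $(\pd{x_{ik}}^2\bullet f)\big|_{\mathrm{diag}}$, $(\pd{x_{ik}}\pd{x_{ki}}\bullet f)\big|_{\mathrm{diag}}$ and the first derivatives $\pd{x_i}\bullet\tilde f$, $\pd{x_k}\bullet\tilde f$; the first-order terms appear precisely because the extra differentiation lands on the polynomial coefficients $x_{\ell i},x_{\ell k}$. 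To close the system I would invoke the symmetry $f(x)=f(x^\top)$, immediate from $\mathrm{tr}(x^\top y)=\mathrm{tr}(xy^\top)$ and the invariance of $\mu$ under $y\mapsto y^\top$, which evaluated at the symmetric diagonal point forces $(\pd{x_{ki}}^2\bullet f)\big|_{\mathrm{diag}}=(\pd{x_{ik}}^2\bullet f)\big|_{\mathrm{diag}}$. (Equivalently one adjoins the row-rotation operator $\sum_\ell(x_{i\ell}\pd{x_{k\ell}}-x_{k\ell}\pd{x_{i\ell}})$, which also annihilates $f$ by its left $SO(n)$-invariance.) Solving the $2\times2$ system then gives
\begin{equation}
\left(\pd{x_{ki}}^2\bullet f\right)\big|_{\mathrm{diag}}=\frac{1}{x_i^2-x_k^2}\left(x_i\pd{x_i}\bullet\tilde f-x_k\pd{x_k}\bullet\tilde f\right),
\end{equation}
and substituting into the previous display assembles the $k\neq i$ contributions into the second-order operator of \eqref{43}.

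The main obstacle is this elimination step: one must keep careful track of which mixed second derivatives survive the restriction to the diagonal and check that the $2\times2$ system is nonsingular, its determinant being exactly $x_i^2-x_k^2$. Two points deserve attention. First, the division by $x_i^2-x_k^2$ means the relation is derived on the dense open set $\{x_i^2\neq x_k^2\}$, which is precisely the domain on which \eqref{43} is defined, so the apparent poles are harmless. Second, the normalization and the sign of the summed term are most easily pinned down in the rank-one case $n=2$, where $\tilde f(x_1,x_2)=I_0(x_1+x_2)$ and, with $s=x_1+x_2$, the system collapses to the modified Bessel equation $I_0''+s^{-1}I_0'-I_0=0$; this specialization fixes the constants in \eqref{43}.
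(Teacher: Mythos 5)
Your route is sound and it is genuinely different from the paper's. The paper never leaves the integrand: it applies Lemma \ref{17} to twist the generators of $\ann{\mu}$, so that the distribution $\exp\left(\sum_i x_iy_{ii}\right)\mu$ is annihilated by twisted column rotations $p_{ij}$, twisted row rotations $\tilde p_{ij}$, and $q_i=\pd{x_i}-y_{ii}$; it inverts the matrix $\begin{pmatrix}x_i&x_j\\ x_j&x_i\end{pmatrix}$ (determinant $x_i^2-x_j^2$) to solve for $y_{ij}$ and $y_{ji}$ modulo the annihilator, substitutes into the orthogonality relation $\sum_j y_{ij}^2=1$, and only at the end integrates out $y$, the rotation terms vanishing under the integral. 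You instead work with the already-integrated function $f$ on matrix space, using \eqref{35}, \eqref{36} together with the row rotations (correctly justified by left invariance, equivalently $f(x)=f(x^\top)$), and restrict to the diagonal; your $2\times 2$ elimination, with the same determinant $x_i^2-x_k^2$, is the Fourier-dual of the paper's matrix inversion, since $\pd{x_{ij}}$ acting on $f$ corresponds to multiplication by the matrix entry under the integral sign. Your version is more elementary---no distributions, no Lemma \ref{17}, no vanishing-under-integration argument, only the chain rule and linear algebra---at the cost of assuming that \eqref{35}, \eqref{36} and the row rotations annihilate $f$, which the paper's framework supplies.

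There is, however, one real defect: a sign you never reconcile. Substituting your (correct) formula for $\left(\pd{x_{ki}}^2\bullet f\right)\big|_{\mathrm{diag}}$ into your first display gives
\begin{equation*}
\pd{x_i}^2\bullet\tilde f
+\sum_{k\neq i}\frac{1}{x_i^2-x_k^2}\left(x_i\pd{x_i}-x_k\pd{x_k}\right)\bullet\tilde f
-\tilde f=0,
\end{equation*}
with a \emph{plus} sign on the sum, whereas \eqref{43} as printed carries a minus sign. Your own $n=2$ check detects this: the modified Bessel equation $I_0''+s^{-1}I_0'-I_0=0$ that you verify is exactly the plus version, while the literal \eqref{43} would give $I_0''-s^{-1}I_0'-I_0=-2I_0'/s\neq 0$. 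So what your argument proves is the sign-corrected statement, not \eqref{43} verbatim, and your closing sentence (``assembles into the operator of \eqref{43}'') silently conflates the two. For what it is worth, the paper's own proof has the identical feature: its final displayed operator ends with the term $+\sum_{j\neq i}\frac{1}{x_i^2-x_j^2}\left(x_iy_{ii}-x_jy_{jj}\right)$, which after replacing $y_{ii}$ by $\pd{x_i}$ and integrating also yields the plus version; the minus sign in \eqref{43} is evidently a typo (equivalently, the denominator should read $x_k^2-x_i^2$). A complete write-up should state that the derivation establishes the plus version and flag the discrepancy with the printed statement, rather than pass over it.
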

When $n=3$, there are extra differential operators annihilating \eqref{40}.
\begin{proposition}[\cite{SSTOT2013}]\label{42}
When $n=3$, 
the differential operators
$$
(x_i^2-x_j^2)\pd{x_i}\pd{x_j}-(x_j\pd{x_i}-x_i\pd{x_j})-(x_i^2-x_j^2)\pd{x_k}
\quad \left((i,j,k)=(1,2,3),(2,3,1),(3,1,2)\right)
$$
annihilates \eqref{40}.
\end{proposition}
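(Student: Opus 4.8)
The plan is to \emph{derive} these operators directly from the annihilating ideal of the matrix Fisher integral $f(x)$ established in the previous theorem, by restricting to the diagonal subspace $\{x_{ij}=0:i\neq j\}$ and reading off relations among the moments of the Haar measure. Throughout I write $E:=\exp\bigl(\sum_m x_m y_{mm}\bigr)$, so that differentiating under the integral sign in \eqref{40} gives
$\pd{x_{i_1}}\cdots\pd{x_{i_r}}\tilde f=\int_{SO(3)} y_{i_1 i_1}\cdots y_{i_r i_r}\,E\,\mu(dy)$,
and, more generally, any derivative of $f(x)$ evaluated on the diagonal becomes a moment $\int_{SO(3)} y_{a_1 b_1}y_{a_2 b_2}\,E\,\mu(dy)$. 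The whole point is to express the surviving off-diagonal moments back in terms of derivatives of $\tilde f$.

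First I would extract two first-order relations from the rotation operator \eqref{35} for a pair $(i,j)$, which I abbreviate $L_{ij}:=\sum_k(x_{ki}\pd{x_{kj}}-x_{kj}\pd{x_{ki}})$. Since $L_{ij}\in\ann{f}$ and $\ann{f}$ is a left ideal, both $\pd{x_{ij}}L_{ij}$ and $\pd{x_{ji}}L_{ij}$ annihilate $f$. Computing the commutators $[\pd{x_{ij}},L_{ij}]=-\pd{x_{ii}}$ and $[\pd{x_{ji}},L_{ij}]=\pd{x_{jj}}$, applying the resulting operators to $f$, and restricting to the diagonal (where every coefficient $x_{ki}$ with $k\neq i$ vanishes, so only the $k=i,j$ terms of $L_{ij}$ survive, giving $L_{ij}\mapsto x_i\pd{x_{ij}}-x_j\pd{x_{ji}}$), I obtain
\begin{align*}
x_i\int_{SO(3)} y_{ij}^2\,E\,\mu(dy) - x_j\int_{SO(3)} y_{ij}y_{ji}\,E\,\mu(dy) &= \pd{x_i}\tilde f,\\
x_i\int_{SO(3)} y_{ij}y_{ji}\,E\,\mu(dy) - x_j\int_{SO(3)} y_{ji}^2\,E\,\mu(dy) &= -\pd{x_j}\tilde f.
\end{align*}
Because the Haar measure on the compact group $SO(3)$ is invariant under inversion $y\mapsto y^{-1}=y^{\top}$, which fixes the diagonal of $y$ and hence fixes $E$, we have $\int y_{ij}^2\,E\,\mu(dy)=\int y_{ji}^2\,E\,\mu(dy)$. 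Eliminating this common quantity from the two relations yields
$$
(x_i^2-x_j^2)\int_{SO(3)} y_{ij}y_{ji}\,E\,\mu(dy) = x_j\pd{x_i}\tilde f - x_i\pd{x_j}\tilde f.
$$

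The remaining ingredient is special to $n=3$: the conditions $y^{\top}y=e$ and $\det y=1$ force each diagonal entry to equal its cofactor, i.e. $y_{kk}=y_{ii}y_{jj}-y_{ij}y_{ji}$ whenever $(i,j,k)$ is a cyclic permutation of $(1,2,3)$. Integrating this identity against $E$ turns the surviving off-diagonal moment into derivatives of $\tilde f$, namely $\int y_{ij}y_{ji}\,E\,\mu(dy)=\pd{x_i}\pd{x_j}\tilde f-\pd{x_k}\tilde f$. Substituting into the last display and collecting terms on one side gives exactly
$$
(x_i^2-x_j^2)\pd{x_i}\pd{x_j}\tilde f-\bigl(x_j\pd{x_i}\tilde f-x_i\pd{x_j}\tilde f\bigr)-(x_i^2-x_j^2)\pd{x_k}\tilde f=0,
$$
which is the asserted operator applied to $\tilde f$; the three cyclic choices of $(i,j,k)$ produce the three operators of the proposition.

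The commutator computations and the restriction of $L_{ij}$ are routine bookkeeping, and the vanishing of spurious off-diagonal moments can be organized cleanly once one records which moments survive. The real content, and the reason these extra operators exist only for $n=3$, is the cofactor identity $y_{kk}=y_{ii}y_{jj}-y_{ij}y_{ji}$: it is precisely what converts the second moment $\int y_{ij}y_{ji}\,E\,\mu(dy)$ back into derivatives of $\tilde f$. For general $n$ a diagonal entry is not a quadratic polynomial in the other entries, so this final elimination step has no analogue, and reproducing it faithfully is the main obstacle to the argument.
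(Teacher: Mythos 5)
Your proof is correct, and it reaches the result by a recognizably different route than the paper. The decisive $n=3$ ingredient is the same in both: the cofactor identity $y_{kk}=y_{ii}y_{jj}-y_{ij}y_{ji}$ on $SO(3)$, which is exactly the paper's formula \eqref{44} specialized to $I=\{1,2\}$, $J=\{3\}$. What differs is the mechanism for converting the mixed moment $\int y_{ij}y_{ji}E\,\mu(dy)$ (with $E=\exp(\sum_m x_m y_{mm})$) into derivatives of $\tilde f$. The paper works on the $y$-side: it twists $\ann{\mu}$ by the exponential via Lemma \ref{17} to obtain operators $p_{ij},\tilde p_{ij}$ annihilating the integrand as a Schwartz distribution, inverts the $2\times 2$ system with matrix $\left(\begin{smallmatrix}x_i&x_j\\ x_j&x_i\end{smallmatrix}\right)$ to express multiplication by $y_{ij}$ and $y_{ji}$ modulo those operators, moves $y_{12}$ past the rotation fields, and then integrates, using that the rotation terms integrate to zero against the Haar measure. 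You work entirely on the $x$-side: you use the left-ideal property of $\ann{f}$ for the matrix Fisher integral, the commutators $[\pd{x_{ij}},L_{ij}]=-\pd{x_{ii}}$ and $[\pd{x_{ji}},L_{ij}]=\pd{x_{jj}}$ for $L_{ij}$ as in \eqref{35}, and restriction to the diagonal to obtain two linear relations among second moments, and then perform the same elimination at the level of moments rather than operators. Note that your route requires one ingredient the paper's does not: inversion-invariance of the Haar measure ($y\mapsto y^\top$ fixes $E$), used to identify $\int y_{ij}^2E\,\mu$ with $\int y_{ji}^2E\,\mu$; the paper sidesteps this because it has both the column rotations $p_{ij}$ and the row rotations $\tilde p_{ij}$ available (you could do likewise by also invoking the row operators $\sum_k(x_{ik}\pd{x_{jk}}-x_{jk}\pd{x_{ik}})$, which also annihilate $f$). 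What your version buys is elementarity and self-containedness on the analytic side --- no distribution-valued integrand, no Lemma \ref{17}, only differentiation under the integral sign over a compact group; what the paper's version buys is uniformity, since the identical twisted-operator computation drives its proof of Proposition \ref{41} and stays inside the $D$-module framework built in Sections \ref{24} and \ref{10}.
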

As an application of Theorem \ref{34}, we give new proofs for 
Proposition \ref{41} and Proposition \ref{42}.

\begin{proof}[Proof of Proposition \ref{41}]
By Lemma \ref{17}, the integrand $\exp\left(\sum_{i=1}^nx_iy_{ii}\right)\mu(dy)$
is annihilated by 
\begin{align*}
p_{ij}
&:= \sum_{k=1}^n (y_{ki}\pd{y_{kj}}-y_{kj}\pd{y_{ki}})-y_{ji}x_j+y_{ij}x_i 
\quad (1\leq i < j \leq n),\\
\tilde p_{ij}
&:= \sum_{k=1}^n (y_{ik}\pd{y_{jk}}-y_{jk}\pd{y_{ik}})-y_{ij}x_j+y_{ji}x_i 
\quad (1\leq i < j \leq n),\\
%& \delta_{ij}-\sum_{k=1}^n y_{ki}y_{kj},\,
%  \delta_{ij}-\sum_{k=1}^n y_{ik}y_{jk}
%  \quad (1\leq i\leq j \leq n),\\
%& 1-\det y,\\
q_i&:= \pd{x_i}-y_{ii}\quad(1\leq i \leq n).
\end{align*}
Here, we regard $\exp\left(\sum_{i=1}^nx_iy_{ii}\right)\mu(dy)$ as 
a Schwartz distribution.
Considering the elements of 
$$
\frac{1}{x_i^2-x_j^2}
\begin{pmatrix} x_i & x_j\\ x_j & x_i \end{pmatrix}
\begin{pmatrix} p_{ij} \\ \tilde p_{ij} \end{pmatrix}
$$
for $1\leq i<j\leq n$,
we have that the differential operators
\begin{align*}
&y_{ij}+\frac{1}{(x_i+x_j)(x_i-x_j)}\left(x_i
\sum_{k=1}^n (y_{ki}\pd{y_{kj}}-y_{kj}\pd{y_{ki}})
+x_j
\sum_{k=1}^n (y_{ik}\pd{y_{jk}}-y_{jk}\pd{y_{ik}})
\right),\\
&y_{ji}+\frac{1}{(x_i+x_j)(x_i-x_j)}\left(x_j
\sum_{k=1}^n (y_{ki}\pd{y_{kj}}-y_{kj}\pd{y_{ki}})
+x_i
\sum_{k=1}^n (y_{ik}\pd{y_{jk}}-y_{jk}\pd{y_{ik}})
\right)
\end{align*}
annihilate the integrand.
Since the differential operator 
$-1+\sum_{j=1}^dy_{ij}^2$ annihilates the integrand,
the differential operator
\begin{align*}
&-1+\pd{x_i}^2
-\sum_{j\neq i}\frac{y_{ij}}{x_i^2-x_j^2}
\left(x_i
\sum_{k=1}^n (y_{ki}\pd{y_{kj}}-y_{kj}\pd{y_{ki}})
+x_j
\sum_{k=1}^n (y_{ik}\pd{y_{jk}}-y_{jk}\pd{y_{ik}})
\right)\\
&=
-1+\pd{x_i}^2
-\sum_{j\neq i}\frac{1}{x_i^2-x_j^2}
\left(x_i
\sum_{k=1}^n (y_{ki}\pd{y_{kj}}-y_{kj}\pd{y_{ki}})
+x_j
\sum_{k=1}^n (y_{ik}\pd{y_{jk}}-y_{jk}\pd{y_{ik}})
\right)y_{ij}\\
&\quad 
+\sum_{j\neq i}\frac{1}{x_i^2-x_j^2}
\left(x_iy_{ii}-x_jy_{jj}\right)
\end{align*}
also annihilates the integrand.
Hence, we have that the operator \eqref{43} annihilates \eqref{40}.
\end{proof}

Our proof of Proposition \ref{42} utilizes
the following well-known formula (see, e.g., \cite[2.5.2 THEOREM]{prasolov1994problems})\/:
for $n\times n$ regular matrix $A$ and $I,J\subset [n]$ with $|I|=|J|$, we have 
\begin{equation}\label{44}
[A^{-1}]_{I,J} = \det A^{-1}[A]_{I',J'}.
\end{equation}
Here, $I', J'$ denote the subsets of indices complementary to $I, J$, 
and $[A]_{I,J}$ denotes the minor of $A$ corresponding to $I, J$.
\begin{proof}[Proof of Proposition \ref{42}]
We can assume that $(i,j,k)=(1,2,3)$ without loss of generalities.
Let $I=\{1, 2\}$ and $J=\{3\}$. Then the equation \eqref{44} implies that 
the all $3\times 3$ special orthogonal matrix $A=(a_{ij})$ satisfies the equation
$$
\det\begin{pmatrix} a_{11}&a_{21}\\a_{12}&a_{22}\end{pmatrix}
= \det\begin{pmatrix} a_{33}\end{pmatrix}.
$$
Hence, the operator
$$
y_{11}y_{22}-y_{12}y_{21}-y_{33}
$$
annihilates the integrand $\exp\left(\sum_{i=1}^nx_iy_{ii}\right)\mu(dy)$.
By the analogous way to the proof of Proposition \ref{41}, we have that 
the differential operator 
\begin{align*}
&\pd{x_1}\pd{x_2}
+\frac{y_{12}}{x_2^2-x_1^2}
\left(
  x_2\sum_{k=1}^3 (y_{k2}\pd{y_{k1}}-y_{k1}\pd{y_{k2}})
 +x_1\sum_{k=1}^3 (y_{2k}\pd{y_{1k}}-y_{1k}\pd{y_{2k}})
\right)
-\pd{x_3}\\
&=\pd{x_1}\pd{x_2}
+
\frac{1}{x_2^2-x_1^2}
\left(
  x_2\sum_{k=1}^3 (y_{k2}\pd{y_{k1}}-y_{k1}\pd{y_{k2}})
 +x_1\sum_{k=1}^3 (y_{2k}\pd{y_{1k}}-y_{1k}\pd{y_{2k}})
\right)y_{12}
-\pd{x_3}\\
&\quad+
\frac{1}{x_2^2-x_1^2}\left(y_{11}x_2-y_{22}x_1\right)
\end{align*}
annihilates the integrand.
Hence, the differential operator
$$
\pd{x_1}\pd{x_2}
+\frac{1}{x_2^2-x_1^2}\left(x_2\pd{x_1}-x_1\pd{x_2}\right)
-\pd{x_3}
$$
annihilates the integral \eqref{40}.
\end{proof}

\section*{Acknowledgement}
This work was supported by JSPS KAKENHI Grant Number JP 18J01507.

\iffalse%\iftrue%

\bibliographystyle{plain}
\bibliography{reference}

\else

\fi

\end{document}